\theoremstyle{plain}
\newtheorem{theorem}{Theorem}[section]
\newtheorem*{theorem*}{Theorem}
\newtheorem*{corollary*}{Corollary}
\newtheorem{lemma}[theorem]{Lemma}
\theoremstyle{definition}
\newtheorem*{definition*}{Definition}
\newcommand{\ZI}{\mathbb{Z}}
\renewcommand{\1}{\mathbf{1}}
\newcommand{\0}{\mathbf{0}}
\DeclareMathOperator{\Aut}{\mathrm{Aut}}
\DeclareMathOperator{\inj}{\hookrightarrow}
\newcommand{\SU}{\mathrm{SU}}
\newcommand{\ts}{\textsection}
\newcommand{\ip}[1]{\langle#1\rangle} 
\title{Pauli matrices and ring puzzles}
\author{Sylvain Barr\'e}
\author{Mika\"el Pichot}
\address{Sylvain Barr\'e, UMR 6205, LMBA, Université de Bretagne-Sud,BP 573, 56017, Vannes, France}\email{Sylvain.Barre@univ-ubs.fr}
\address{Mika\"el Pichot, McGill University, 805 Sherbrooke St W., Montr\'eal, QC H3A 0B9, Canada}\email{pichot@math.mcgill.ca}
\begin{document}
\begin{abstract}
We study a family of tessellations of the Euclidean plane which are obtained by local developments of algebraic equations satisfied by the Pauli matrices.
\end{abstract}

\maketitle

The Pauli matrices are Hermitian matrices, often denoted 
\[
X := \begin{pmatrix}0&1\\1&0\end{pmatrix},\ \   Y := \begin{pmatrix}0&-i\\i&0\end{pmatrix},\ \   \text{ and } \ \ Z := \begin{pmatrix}1&0\\0&-1\end{pmatrix},
\]
introduced by Pauli to describe the electron spin (and, more generally, any  spin $\frac 1 2$ particle or qubit); thus, the commutation relations
\[
XY-YX=2iZ
\]
\[
YZ-ZY=2iX
\]
\[
ZX-XZ=2iY
\]
and  normalization 
\[
X^2+Y^2+Z^2=3
\] 
express in quantum physics the basic properties of the spin angular momentums for spin $\frac 1 2$ particles.

When viewed as elements of $\SU(2)$, the Pauli matrices  generate a finite subgroup $P=\ip{X,Y,Z}$ called the Pauli group. The algebraic structure of this group is well--known and elementary to describe (see \ts \ref{S - triangle}).

In this paper, we are interested the following lesser known set of three equations also satisfied by the Pauli matrices: 
\[
XYXZYZ=1
\]
\[
YZYXZX=1
\]
\[
ZXZYXY=1
\]
We are interested in planar tessellations associated with $X$, $Y$ and $Z$; roughly speaking, we wish to study how these equations fit together to ``fill in Euclidean planes of Pauli matrices''.

To put the problem in more precise terms, it is convenient to use the notion of a ring puzzle considered in \cite{autf2puzzles}. For the purpose of the present paper,  the following particular case is sufficient: a ring puzzle is, by definition,  a Euclidean plane, say $E$, tessellated by equilateral triangles, in such a way that the algebraic relations between the Pauli matrices are satisfied locally as follows: we imagine having as puzzle pieces  small triangles marked with Pauli matrices, and require that these pieces fit together at a vertex of the tessellation, when the cyclic product in $\SU(2)$ of the corresponding Pauli matrices is trivial. We require that this condition holds at every vertex. The question then is: what puzzles can be obtained which satisfy these constraints (and additional ones of nontriviality, see \ts\ref{S - Puzzle set}).

We refer to these tessellations  as Pauli puzzles. We prove that there are four families of Pauli puzzles: 
\[
\left.
    \begin{array}{l} \text{the $X$-puzzles}\\
\text{the $Y$-puzzles}\\
\text{the $Z$-puzzles}\\
 \end{array}
  \right\} = \text{three sets of two puzzles} 
\]
and an infinite family of pairwise nonisomorphic puzzles, which we call the $T$-puzzles. 
This result is the content of Theorem  \ref{T - Pauli classification 2}. 

The puzzles will be described explicitly in the course of the proof of Theorem  \ref{T - Pauli classification 2}. 

We now turn to the motivation of the paper, which lies in geometric group theory.  We refer to \cite{BH} for a general reference and definitions of  the concepts that are used below.

Let us first discuss \cite{autf2puzzles} briefly. 
Let $F_2$ denote the free group on 2 generators. The paper  \cite{autf2puzzles}  introduces a notion of ring puzzle associated with the group $\Aut(F_2)$ of automorphisms of $F_2$. This group acts properly discontinuously on a CAT(0) space with compact quotient (the Brady complex, see \cite{brady1994automatic,brady2000artin}) in which the flat planes define $\Aut(F_2)$-puzzles. It is proved in \cite{autf2puzzles} that the ring puzzles associated with $\Aut(F_2)$ can be classified up to isomorphism, namely, up to an isometric cell bijection preserving the cell structure of the Brady complex (see \cite[\ts 2]{autf2puzzles}).  

In \ts\ref{S - triangle} of the present paper, we construct a group $G$ from the Pauli matrices which  is an analog of $\Aut(F_2)$ in this context. The group $G$ also acts on a CAT(0) space $\Delta$. It is constructed from a triangle of group (defined in \ts\ref{S - triangle}) which is shown to be developable. (The group $G$ is an example of a group of intermediate rank; namely, it belongs to the class of  ``Moebius--Kantor groups'' which was introduced  in \cite[\ts 4]{rd}.) Flat planes in $\Delta$  leads  to  tessellations of the Euclidean plane by equilateral triangles. 
The definition of the Pauli puzzles relies  on a special property of the action of $G$ on $\Delta$, namely, the equivariance relation with respect to the cyclic permutation 
$T\colon X\mapsto Y\mapsto Z$ of the Pauli matrices; see \ts\ref{S - action G Delta}.

\section{Permuting the Pauli matrices; the basic construction}\label{S - triangle}

The Pauli matrices anticommute and satisfy the relations  
\begin{enumerate}
\item[]
$X^2=Y^2=Z^2=1$
\item[]
$XY=iZ$, $YZ=iX$, $ZX=iY$
\end{enumerate}
where $i=XYZ$ is central. 

The group they generate viewed viewed as unitary matrices is a finite subgroup $P:=\ip{X,Y,Z}$  of $\SU(2)$---the Pauli group---which isomorphic to a central product of the cyclic group $\ZI/4\ZI$ and the dihedral group $D_4$. 

Observe  that the Pauli matrices can be permuted cyclically
\[
 X\mapsto Y\mapsto Z
\]
by a unique automorphism of order 3 of $P$. We let $U := P\rtimes \ZI/3\ZI$ be the corresponding semi-direct product, and let $T$ denote the corresponding generator of $\ZI/3\ZI$.

The basic construction presented in this section, using triangles of groups, is that of a group $G$ which contains $U$, from which the Pauli ring puzzles are derived. In particular, the development of this complex of groups satisfies an equivariance property described in \ts \ref{S - action G Delta} which is crucial.

A triangle of groups is a commutative diagram of groups and injective homomorphisms (viewed as inclusions)

\begin{center}
\begin{tikzpicture}
    \matrix (m) [
      matrix of math nodes,
      row sep=.3cm,
      column sep=.2cm,
    ] {
  &          & & U &\\
 &			& A &  & B  \\
 &		&	&  D&&  \\
\ \ \ W\ \ \ && & C && & \ \ \ V\ \ \ \\
  };
    \path[->]        (m-2-3) edge node[left,above] {}(m-1-4);
    \path[->]        (m-2-3) edge node[left,above] {}(m-4-1);
    \path[->]        (m-2-5) edge node[left,above] {}(m-1-4);
    \path[->]        (m-2-5) edge node[left,above] {}(m-4-7);
    \path[->]        (m-4-4) edge node[left,above] {}(m-4-1);
    \path[->]        (m-4-4) edge node[left,above] {}(m-4-7);

    \path[->]        (m-3-4) edge node[left,above] {}(m-2-3);
    \path[->]        (m-3-4) edge node[left,above] {}(m-2-5);
    \path[->]        (m-3-4) edge node[left,above] {}(m-4-4);
    
\end{tikzpicture}
\end{center}
such that $A\cap B=B\cap C=C\cap A =D$, $A=U\cap W$, $B=U\cap V$, $C=V\cap W$.

The direct limit of this diagram is a group $G$, which is uniquely defined. It is generally not trivial to show that $G$ is infinite, but there are many well--known constructions for which this is the case (see e.g., \cite{MumfordA2,TitsTriangles,GromovHyp}) and general  criteria  for this to happen. A comprehensive reference for triangles and more general complexes of groups is \cite{BH}.

In this paper the groups that we consider are
\begin{enumerate}
\item
[] $U= P\rtimes \ZI/3\ZI$; $V=S_3$; $W=\ZI/6\ZI$;
\item
[] $A=\ZI/3\ZI$; $B=C=\ZI/2\ZI$; 
\item
[] $D=1$;
\end{enumerate}
with maps
\begin{enumerate}
\item
[] $A,B\to U$ given respectively by $T$ and $X$;
\item
[]$B,C\to V$ by $(1,2)$ and $(2,3)$;
\item
[]$A,C\to W$ by $2$ and $3$. 
\end{enumerate}
In this case $G$ is again infinite. 

Recall that a triangle of group is said to be developable if it is isomorphic to the complex of group associated an action of $G$ on a 2-complex $\Delta$ (see Chap.\ 3 in \cite{BH}, \ts C.2.9).  Examples are the triangles of groups of \cite{TitsTriangles}. The general criterion established in \cite{StallingsTriangles,HeafligerOrbihedra,CorsonTriangles,BH} (see also \cite{GromovHyp,BB,BS}) is that if a triangle of groups satisfies a nonpositive curvature condition, it is developable. This holds for the above triangle, as we  now prove.  

The Gersten--Stallings angles are defined as follows. If $A,B$ are subgroups of a group $U$, 
 let the shortest word in the kernel of the unique homomorphism $A*B\to U$ have length $n$ (where $n:=\infty$ if the map is injective). The angle between $A$ and $B$ in $U$ is  defined to be $2\pi/n$. For example, the angles between $B$ and $C$ and between $A$ and $C$ in $V$ and $W$ equal, respectively,  $\pi/3$ and $\pi/4$. It is proved in the aforementioned references that if in a triangle of groups the sum of the angles at $U$, $V$ and $W$ is at most $\pi$ (in which case the triangle of groups is said to be nonspherical), then it is developable. 
 
 \begin{lemma}\label{L - local development U}
 The length of the shortest word in the kernel of $A*B\to U$ is 12.
 \end{lemma}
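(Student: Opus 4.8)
The plan is to read the map $\iota\colon A*B\to U$ concretely and bracket the minimal relator length from both sides. Since $A=\ZI/3\ZI$ maps by $T$ and $B=\ZI/2\ZI$ maps by $X$, we have $A*B\cong \ZI/3\ZI * \ZI/2\ZI$, and every reduced word is an alternating product of syllables $T^{\pm1}$ (from $A$) and $X$ (from $B$); the length to be computed is the number of such syllables. Note that $\iota$ is onto, since $\langle X,T\rangle\ni TXT^{-1}=Y$ and $T^2XT^{-2}=Z$, so $\iota(A*B)=U$, a group of order $48$. For the upper bound I would first verify $XYXZYZ=1$ directly from the multiplication rules ($XYX=-Y$, hence $XYXZ=-iX$, then $XYXZY=Z$ and $XYXZYZ=Z^2=1$), and then substitute $Y=TXT^2$ and $Z=T^2XT$. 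Merging adjacent powers of $T$ turns $XYXZYZ$ into the reduced word $XTXT^2XT^2XT^2XTXT$, that is, a word of exactly $12$ syllables lying in $\ker\iota$; so the minimal length is at most $12$.

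For the lower bound I would exploit the structure of $U$. The element $i=XYZ$ is central of order $4$, and $U/\langle i\rangle$ is the Klein four group $\langle \bar X,\bar Y,\bar Z\rangle$ extended by the cyclic permutation $T$, i.e.\ $U/\langle i\rangle\cong A_4$, with $X\mapsto$ a double transposition and $T\mapsto$ a $3$-cycle. Two cheap homomorphisms then constrain any kernel element: the quotient $U\to U/P\cong\ZI/3\ZI$ ($X\mapsto 0$, $T\mapsto 1$) forces the signed sum of the $T$-exponents to be $\equiv 0\pmod 3$, while the sign character $U\to\{\pm1\}$ sending every Pauli matrix to $-1$ and $T$ to $+1$ (one checks this respects all the relations) forces the number of $X$-syllables to be even. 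The key local computation is $(TX)^3=i$ and $(T^{-1}X)^3=i^{-1}$; combining them produces a second length-$12$ relator $(TX)^3(T^{-1}X)^3=i\,i^{-1}=1$, which makes the mechanism transparent.

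With these tools the lower bound becomes a short finite analysis organized by syllable length $\ell\le 11$. Since $\iota$ factors through $A_4$, a direct evaluation shows $A_4$ has no relator of length $\le 5$ (the cases $\ell\le 4$ and $\ell=5$ being pinned down by the mod-$3$ condition, after which only a handful of alternating words remain, none equal to $1$ in $A_4$); the shortest $A_4$-relators have length $6$, of type $(TX)^3$, but these map to $i^{\pm1}\ne 1$ in $U$ and so are not kernel elements. The parity constraint eliminates $\ell=6$ and $\ell=10$ outright, because a reduced alternating word of even length $2m$ has $m$ syllables of each type, and $m=3,5$ is odd. The remaining lengths $\ell\in\{7,8,9,11\}$ force an even number of $X$'s together with the mod-$3$ condition on the $T$-exponents, which leaves only a small explicit list of candidates (for instance $\ell=7$ reduces to $X(T^{\pm1}X)^3=i^{\pm1}X\ne 1$); evaluating each in $U$ shows none is trivial. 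Hence no kernel word has length $<12$, and the minimum is exactly $12$.

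The main obstacle is precisely this last finite check, but the central extension $1\to\langle i\rangle\to U\to A_4\to 1$ renders it conceptual rather than blind: every short word that dies in $A_4$ is assembled from the length-$6$ ``triangle'' relators $(TX)^{\pm 3}$, each carrying central charge $i^{\pm1}$, and since $i$ has order $4$ these charges cannot cancel to $1$ with fewer than one positive and one negative block, forcing length $6+6=12$. I expect the only genuinely computational part to be ruling out the $\ell=8,9,11$ candidates, all of which should either already be nontrivial in $A_4$ or retain a nonzero power of $i$.
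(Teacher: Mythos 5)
Your proof is correct, but it follows a genuinely different route from the paper's. The paper reads the minimal relator length off the local development: the link at the vertex stabilized by $U$ is the bipartite coset graph on $U/A\sqcup U/B$ with one edge per element of $U$, and it is identified as the barycentric subdivision of the undirected Cayley graph of the Pauli group $P$ with respect to $\{X,Y,Z\}$, i.e.\ the Moebius--Kantor graph; since reduced kernel words of $A*B\to U$ correspond to immersed closed cycles in this link, the girth $6$ of that graph gives minimal length $2\cdot 6=12$ at once. You instead stay inside $U$: the quotient $U\to U/P\cong\ZI/3\ZI$ and the sign character (which is indeed well defined, since $P^{\ab}\cong(\ZI/2\ZI)^3$ carries the character $X,Y,Z\mapsto -1$ and it is $T$-invariant, so it extends by $T\mapsto +1$) filter the candidate words, the central extension $1\to\langle i\rangle\to U\to A_4\to 1$ together with $(TX)^3=i$ explains why the length-$6$ $A_4$-relators fail to lift, and a finite evaluation through length $11$ closes the lower bound. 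I checked the surviving cases and they come out as you predict: $\ell=4$ gives $YX$, $ZX$; $\ell=8$ gives $YZYX=-iY$ and $(YX)^2=-1$; the $\ell=9,11$ candidates all evaluate to elements of $\langle i\rangle\cdot\{X,Y,Z\}\setminus\{1\}$ or to $-1$; and your upper-bound word $XTXT^2XT^2XT^2XTXT$ is the correct $12$-syllable rewriting of $XYXZYZ=1$. Two caveats, neither fatal: your closing ``central charge'' paragraph is heuristic only --- syllable lengths of products of conjugates of $(tx)^{\pm 3}$ are not additive, so it cannot replace the enumeration, which is the actual proof; and the enumeration can be shortened by observing that a kernel word of minimal length is necessarily cyclically reduced, hence cyclically alternating of even syllable length $2m$ with exactly $m$ occurrences of $X$, so your parity character kills $m$ odd in one stroke and only $\ell=4,8$ (two cyclic classes each) need evaluating --- the odd lengths $7,9,11$ never arise. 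As for what each approach buys: the paper's argument is immediate once the link is recognized, and that recognition is needed anyway (it is exactly what places $G$ in the class of Moebius--Kantor groups), while yours is elementary and self-contained, makes the extremal relator explicit, and exposes the mechanism behind minimality as the cancellation $(TX)^3(T^{-1}X)^3=i\,i^{-1}=1$.
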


To prove this lemma, consider the local development at $U$. The link in the development is isomorphic to a directed graph $L$ with vertex set $U/A\sqcup U/B$ and edge set corresponding to the elements of $U$ (which acts simply transitively by definition), where the initial vertex of $s\in U$ is $sB$ and its terminal vertex is $sA$. (Remark: the convention here is opposite to that of \cite{StallingsTriangles},  in order for $L$ to coincide (with our notation) with the scwol of the barycentric subdivision associated with its topological realization.) Thus, $B$, $TB$ and $T^2B$ are in the 1-neighborhood of $A$, while $A$ and $XA$ are in the the 1-neighborhood of $B$. By equivariance, it follows that $L$ is the barycentric subdivision of a topological graph $L'$, which is the undirected Cayley graph of the Pauli group with respect to the Pauli matrices.  It is easy to check that this graph has girth 6 (in fact, this graph is well--known to be isomorphic to the Moebius--Kantor graph, which has girth 6), which proves the lemma.

(It follows that  the group $G$ is an example of a Moebius--Kantor group, i.e.,  a group which acts properly discontinuously with compact quotient on (the topological realization of) a simplicial complex $\Delta$ of dimension 2, whose links at vertices are isomorphic to the Moebius--Kantor graph. These groups were introduced in \cite{rd}.)

In particular, the triangle of group is nonspherical and developable. Furthermore, if one defines a flat metric on the fundamental domain using the Gestern--Stallings angle, then $\Delta$ verifies the link condition (see \cite[Chap. II.5]{BH}) and therefore is a CAT(0) space.

We shall maintain the notation $(X,Y,Z,T,P,U,G,\Delta)$ throughout the paper.

\section{A  Pauli puzzle set}\label{S - Puzzle set}

Let $E$ denote the Euclidean plane endowed with the standard tiling by unit equilateral triangles.    
We want to build $E$ using basic tiles, which are distinguished unit triangles of three types, $X$, $Y$, and $Z$, corresponding to the Pauli matrices. 

As mentioned in the introduction, we will do so by defining a ring puzzle in the sense of \cite{autf2puzzles}, assigning conditions that hold around every vertex of $E$ and determine the local neighbourhoods. 

We require the following conditions:

\begin{enumerate}
\item
 that tiles of the same type are not adjacent; equivalently, the product of two adjacent matrices is not the identity
\item
that the cyclic  product of the matrices neighbouring  every vertex is the identity matrix
\end{enumerate} (the latter condition is independent of the chosen initial letter in the product and the orientation of $E$).  

A solution to this ring puzzle set will be called a Pauli puzzle. 

\begin{lemma}\label{L - rings}
The following words of length 6 in $\ZI/2\ZI*\ZI/2\ZI*\ZI/2\ZI$ 
\begin{enumerate}
\item
[] $XYXZYZ$
\item
[] $YZYXZX$
\item
[] $ZXZYXY$
\end{enumerate}
are a fundamental set of reduced words of length 6 representing the identity in the Pauli group.  
\end{lemma}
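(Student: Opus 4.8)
```latex
The plan is to work inside the Pauli group $P$ using the presentation data recalled at the start of \ts\ref{S - triangle}, namely that $X,Y,Z$ are involutions with $XY=iZ$, $YZ=iX$, $ZX=iY$, and that $i=XYZ$ is central of order $4$. The statement has two halves: first, that each of the three listed words represents the identity; second, that these three (together with nothing else) form a \emph{fundamental set}, meaning every reduced length-$6$ word equal to $1$ in $P$ is one of these three up to the relevant symmetry. The verification of the first half is a direct computation: writing $XYXZYZ$ and repeatedly applying $XY=iZ$ etc., I would push all occurrences of the central scalar $i$ to the front and check that the scalar and the residual word both collapse to $1$. For instance $XYXZYZ=(iZ)X(ZYZ)$, and since $ZYZ$ can be simplified using $ZY=\bar i X$ (the inverse relation to $YZ=iX$), the product telescopes; the three words are cyclic/$T$-images of one another under $X\mapsto Y\mapsto Z$, so once one is checked the other two follow by the automorphism $T$, giving the first half almost for free.

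For the second half, the key observation is that reduced length-$6$ words representing $1$ correspond exactly to \emph{closed geodesic loops of length $6$} in the Cayley graph of $P$ with respect to $\{X,Y,Z\}$. This is precisely the graph identified in the proof of Lemma~\ref{L - local development U} as the Moebius--Kantor graph $L'$, which has girth $6$. So the problem reduces to enumerating the $6$-cycles through a fixed vertex (say the identity) of the Moebius--Kantor graph, up to the symmetries we are allowed to quotient by. ``Reduced'' means no two consecutive letters are equal, which in the graph means the walk never immediately backtracks along an edge --- i.e. we are counting non-backtracking closed walks of length $6$, which for a girth-$6$ graph are exactly the embedded $6$-cycles (hexagons).

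First I would fix the base vertex as $1\in P$ and list the hexagonal faces incident to it. The Moebius--Kantor graph is vertex-transitive and $3$-regular with $16$ vertices and girth $6$; its $6$-cycles are well understood, and a short combinatorial count (or direct inspection using the three generators as the three edge-colours) shows exactly how many pass through a given vertex. Translating each such hexagon back into a word in $X,Y,Z$ then yields a finite explicit list of reduced length-$6$ relations. Finally I would apply the equivalences built into the phrase ``fundamental set'': cyclic rotation of the word (choosing a different starting vertex on the same loop), reversal/orientation (reading the loop the other way), and the order-$3$ symmetry $T\colon X\mapsto Y\mapsto Z$. Under this group of symmetries the enumerated hexagons coalesce into the three listed representatives.

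The main obstacle I expect is the bookkeeping in the second half: making precise exactly which symmetries are permitted when we say ``fundamental set'' (cyclic shift, reversal, and the $T$-action), and checking that the full list of $6$-cycles through the base vertex really does collapse to these three orbits and no more. The first half is a mechanical scalar computation; the combinatorial enumeration, while finite and elementary once one uses the Moebius--Kantor identification and its girth, requires care to confirm completeness --- that there are no further reduced relations of length $6$ hiding among the hexagons. I would lean on the known automorphism group of the Moebius--Kantor graph (which is large and transitive on such cycles) to organize and shorten this case check.
```
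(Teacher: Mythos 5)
Your proposal is correct, and it supplies detail that the paper deliberately omits (its proof of Lemma \ref{L - rings} is a single sentence declaring it straightforward; the intended argument is a direct check, which your Cayley-graph formulation makes systematic). Both halves of your plan go through. For the first half, $XYXZYZ=(XY)(XZ)(YZ)=(iZ)(-iY)(iX)=i\,ZYX=i\cdot(-i)=1$, and the $T$-equivariance disposes of the other two words, exactly as you say. For the second half, your identification of reduced length-$6$ relations with hexagons of the Moebius--Kantor graph is sound, but one step should be made explicit: a reduced relation could a priori fail to be \emph{cyclically} reduced (first letter equal to last), in which case the middle four letters would form a reduced relation of length $4$ --- impossible since the girth is $6$ (Lemma \ref{L - local development U}); only then do based relations coincide with oriented based hexagons. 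The count then works out: the Moebius--Kantor graph has $24$ hexagons, hence $9$ through each vertex, giving $2\times 9=18$ based relations, which match exactly the $18$ cyclic rotations of the three listed words (note that each word already contains its reversal among its own rotations, e.g.\ the inverse of $XYXZYZ$ is $ZYZXYX$, so inversion contributes nothing new). One bookkeeping caution: the equivalence implicit in ``fundamental set'' --- and the one actually used downstream in Lemma \ref{L - Pauli NPC} --- is cyclic permutation and inversion \emph{only}; under it the $18$ relations fall into precisely the three listed classes, which $T$ merely permutes. Your final step proposes to quotient by $T$ as well, which would coalesce everything into a single orbit rather than three; $T$ should be used, as in your first half, to generate the other two representatives, not as part of the equivalence. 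Finally, if you prefer to avoid the graph enumeration entirely, a more elementary completeness check is available: in $P/\langle i\rangle\cong(\ZI/2\ZI)^2$ a relation forces $n_X\equiv n_Y\equiv n_Z\pmod 2$; four equal letters cannot occur in a reduced word of length $6$ and two-letter alternating words equal $\pm iM\neq 1$, so the letter counts must be $(2,2,2)$; there are exactly $30$ such reduced words, each equal to $\pm 1$ (the determinant is $1$, so the central value lies in $\{\pm 1\}$), and inspection leaves the $18$ rotations above.
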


The proof is  straightforward. 

Furthermore, the local neighbourhoods in a Pauli puzzle are precisely described by these reduced words.

Drawn below is a lozenge of Pauli matrices satisfying the two requirements.

\begin{figure}[H]\centering
\includegraphics[width=5cm]{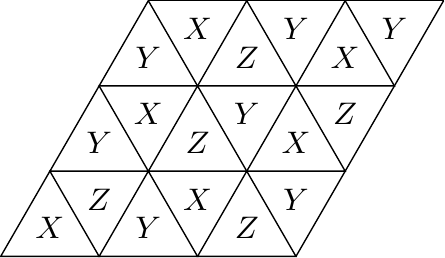}
\end{figure}

We claim:

\begin{theorem}\label{T - Pauli classifiable}
The Pauli puzzles are classifiable. 
\end{theorem}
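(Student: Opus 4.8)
The strategy is to reduce the problem to a finite local analysis and then to a controlled propagation over the plane. Fix an orientation of $E$ and, for each vertex $v$, read the labels of the six triangles surrounding $v$ in cyclic order; call the resulting cyclic word $w(v)$. Condition~(1) makes $w(v)$ cyclically reduced in $\ZI/2\ZI*\ZI/2\ZI*\ZI/2\ZI$, and condition~(2) forces it to represent the identity of $P$, so by Lemma~\ref{L - rings} it is a cyclic rotation of one of the three fundamental words $XYXZYZ$, $YZYXZX$, $ZXZYXY$. A direct inspection shows that each of these words contains every ordered pair of distinct letters exactly once; that is, $w(v)$ is an Eulerian circuit of the complete directed graph on $\{X,Y,Z\}$, and there are exactly $18$ such words, falling into three classes permuted by the cyclic symmetry $T$. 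This assigns to every vertex a \emph{type} in $\{X,Y,Z\}$ and exhibits the Pauli puzzles as the configurations of a constraint-satisfaction problem on the triangular lattice with finitely many local states.

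Next I would analyse how these local states fit together. Two adjacent vertices $v_1,v_2$ share precisely the two triangles bounding their common edge, so the words $w(v_1)$ and $w(v_2)$ are coupled only through a single consecutive pair of labels. Since each fundamental word already contains every such pair, this pairwise coupling alone does not determine the relative types, and the genuine rigidity must be extracted from the two-dimensional constraints: the joint compatibility of the three vertex-words meeting along a common triangle, and the closing-up condition around the elementary loops of vertices. Working out this compatibility, I would produce a finite list of admissible \emph{local transition rules}, describing for a given labelled region which labels may be adjoined across each boundary edge, together with the finitely many choices available at each step.

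With the transition rules in hand, the classification follows from a propagation argument. Because $E$ is simply connected, one may exhaust it by a nested sequence of finite, simply connected unions of triangles; at each stage the transition rules determine the label of a newly adjoined triangle up to a finite (indeed small) set of admissible choices, and simple connectivity guarantees that no monodromy obstruction arises. Hence every Pauli puzzle is reconstructed from a finite seed configuration together with an admissible sequence of local choices, and two puzzles are isomorphic exactly when their seeds and choice-sequences agree up to the symmetries of $E$ and the action of $T$. The set of isomorphism classes is therefore parametrised by these admissible sequences modulo symmetry, which is precisely the assertion that the Pauli puzzles are classifiable; the rigid $X$-, $Y$- and $Z$-types account for the three pairs of puzzles and the residual freedom in the choice-sequences for the infinite $T$-family.

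The step I expect to be the main obstacle is the two-dimensional compatibility analysis that produces the transition rules: because adjacent vertices are only loosely coupled, one must combine the constraints around triangles and around short loops of vertices to see that the admissible continuations are finite and consistent, and in particular that a locally valid configuration never obstructs global extension to a tiling of the whole plane. This is exactly the rigidity encoded by the Eulerian structure of the three fundamental words, and it is the analysis that will be sharpened in the proof of Theorem~\ref{T - Pauli classification 2} to yield the explicit four families.
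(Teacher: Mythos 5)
There is a genuine gap: your argument sets up the problem but defers the entire classification to steps you never carry out. Your opening local analysis is sound and is essentially Lemma~\ref{L - rings} in disguise (the three fundamental words, their rotations and inverses, permuted by $T$), but from there your proof consists of announcing that one ``would produce a finite list of admissible local transition rules'' and then invoking a propagation argument. Two things fail. First, the propagation step proves nothing about classification: \emph{every} tiling problem with finitely many local constraints can be described as ``a finite seed together with an admissible sequence of local choices modulo symmetry,'' so your conclusion is a restatement of the problem, not a structure theorem. The content of the theorem --- made precise in Theorem~\ref{T - Pauli classification 2} --- is that the solution set collapses into exactly two regimes: unions of parallel $T$-strips (an infinite family parametrized by a transverse integer sequence), and three completely rigid pairs of $M$-puzzles determined by a single matrix on a single triangle. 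Nothing in your outline produces this dichotomy or the rigidity. Second, your claim that ``simple connectivity guarantees that no monodromy obstruction arises'' and that ``a locally valid configuration never obstructs global extension'' is precisely the assertion that needs proof, and it is false for local rules in general: for subshifts of finite type on the plane, locally admissible patterns need not extend to globally admissible configurations, and simple connectivity of $E$ gives no such guarantee. You flag this yourself as ``the main obstacle,'' which is an accurate self-assessment --- but an identified obstacle left unresolved is a gap, not a proof.

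What the paper does instead, and what your approach is missing, is a medium-scale structural device: the \emph{$T$-strip}, a $1$-strip along which the matrices advance by $T$ or $T^2$. The rigidity you try to extract from vertex words alone (your Eulerian-circuit observation only controls length-$2$ subwords, which, as you concede, do not couple adjacent vertices strongly enough) is obtained in the paper from Lemma~\ref{L - Pauli NPC}: reduced words of length $4$ occur at most once among the rings, so four consecutive labels around a vertex determine the ring, and this propagates along and across strips (Lemmas~\ref{L - semi infinite}, \ref{L - reflection T strip}, \ref{L - finite T strip}). The classification then splits on whether $p$ contains a bi-infinite $T$-strip: if yes, reflection arguments show $p$ is a union of parallel $T$-strips; if no, a chain of lemmas (finiteness and odd length of matrix-excluding strips, realization of the minimum length $3$, and the three pairwise transverse strips of Lemma~\ref{L - 3 strips}) forces $p$ to be one of the rigid $M$-puzzles. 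To repair your proof you would need to replace the unproven ``transition rules plus no-obstruction'' step with an argument of this kind --- some mechanism that converts the length-$4$ local rigidity into a global dichotomy --- since the vertex-word CSP framing by itself does not distinguish a wild solution space from the highly constrained one the theorem asserts.
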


The proof of Theorem \ref{T - Pauli classifiable} is given in \ts \ref{S - T strips} and \ts \ref{S - proof}.

We shall often make use of the following lemma, which expresses the ``nonpositive curvature'' of Pauli puzzles. The latter property is defined in \cite[\ts 2]{autf2puzzles}.

\begin{lemma}\label{L - Pauli NPC}
Every reduced word of length 4 in $\ZI/2\ZI*\ZI/2\ZI*\ZI/2\ZI$, which appears at most once as a subword of a cyclic permutation of the reduced words found in Lemma \ref{L - rings}, or their inverses.  
\end{lemma}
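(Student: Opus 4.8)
The plan is to reduce the assertion to a finite and transparent verification, after making two structural simplifications that cut down the bookkeeping. First I would record that each of the three rings coincides with its own inverse up to a cyclic permutation: since every letter is an involution in $\ZI/2\ZI * \ZI/2\ZI * \ZI/2\ZI$, the inverse of a length-$6$ ring is just its reversal, and reversing $XYXZYZ$ yields $ZYZXYX$, which is exactly the cyclic shift of $XYXZYZ$ by three positions; by the equivariance $T\colon X\mapsto Y\mapsto Z$ the same holds for the other two rings. Consequently ``their inverses'' contribute no new cyclic words, and it suffices to treat the three rings as three cyclic words (necklaces) of length $6$, each carrying six length-$4$ windows.

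Second, I would exploit that $T$ cyclically permutes the three rings ($XYXZYZ\mapsto YZYXZX\mapsto ZXZYXY$). Hence the full collection of length-$4$ subwords is $T$-invariant and equals $\{\,u,\ Tu,\ T^{2}u : u \text{ a length-4 window of } XYXZYZ\,\}$. Since $T^{3}=\id$, any coincidence $T^{p}u = T^{q}v$ among these windows is equivalent to $u = T^{q-p}v$ with $q-p\equiv 0$ or $\pm 1 \pmod 3$. Thus the lemma follows once I check (a) the six length-$4$ windows of $XYXZYZ$ are pairwise distinct, and (b) no such window equals the $T$- or $T^{2}$-image of another; this propagates distinctness to all $18$ windows and cuts the work by a factor of three.

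For step (a) I would list the six cyclic windows of $XYXZYZ$, namely $XYXZ,\ YXZY,\ XZYZ,\ ZYZX,\ YZXY,\ ZXYX$, and observe they are distinct (which also rules out a repeated factor inside a single ring, as that would force a short period on a length-$6$ cyclic word). For step (b) the convenient bookkeeping device is that each window contains exactly one letter twice, occurring in positions $\{1,3\}$ or $\{2,4\}$; since $T$ permutes this doubled letter, tracking the pair (doubled letter, position pattern) excludes most equalities $u=T^{\pm1}v$ immediately, and the few remaining collisions are settled by comparing the full four letters.

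The computation is entirely finite, so there is no genuine obstacle of principle; the only care needed is to carry out (b) exhaustively, because the (doubled letter, position) invariant is \emph{not} by itself injective on the six windows---for instance $YXZY$ and $YZXY$ share the pattern ``$Y$ in positions $1,4$''---so one must occasionally compare all four letters rather than rely on the invariant. Once (a) and (b) are verified, $T$-equivariance yields the stated ``at most once'' property for every reduced word of length $4$, which is precisely the nonpositive-curvature condition of \cite[\ts 2]{autf2puzzles}.
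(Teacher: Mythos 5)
Your proposal is correct, and at bottom it is the same kind of argument as the paper's: a finite, exhaustive verification. The paper disposes of the check in two sentences by observing that any length-$4$ subword of a ring contains a doubled letter, separated by one or two letters, and then checking directly that no such word recurs. Your two symmetry reductions are sound and genuinely compress that check: since every generator is an involution, the inverse of each ring is its reversal, and the reversal of $XYXZYZ$ is its cyclic shift by three positions, so the inverses contribute no new cyclic words; and $T$-equivariance reduces the eighteen windows to the six windows $XYXZ$, $YXZY$, $XZYZ$, $ZYZX$, $YZXY$, $ZXYX$ of the first ring, plus the cross-comparisons $u=T^{\pm1}v$ (indeed, checking $Tu\neq v$ over all ordered pairs suffices, since $T^{2}u=v$ iff $u=Tv$). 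One small slip: your claim that the doubled letter always sits in positions $\{1,3\}$ or $\{2,4\}$ is false --- in $YXZY$ and $YZXY$ it sits in positions $\{1,4\}$, i.e., the occurrences are separated by two letters, which is precisely the second case the paper's proof mentions (``separated by 1 or 2 letters''). This is harmless, because you explicitly note the (doubled letter, position pattern) invariant is not injective and fall back on comparing all four letters; but as stated the bookkeeping claim contradicts your own example. The verification itself does go through: the six windows are pairwise distinct, their $T$-images $YZYX$, $ZYXZ$, $YXZX$, $XZXY$, $ZXYZ$, $XYZY$ avoid the window set, so all eighteen windows are pairwise distinct, and the six reduced length-$4$ words that never appear are exactly those of the form $MNMN$ --- a fact the paper uses later in the proof that a maximal finite $T$-strip is a parallelogram.
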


\begin{proof}
A reduced word of length 4 which appears contains two occurrences of a letter, which might be separated by 1 or 2 letters. It is straightforward to check  that a given word fails to appear twice.
\end{proof}

\section{The action of $G$ on $\Delta$}\label{S - action G Delta}

The action of $G$ on $\Delta$ has an equivariance property which allows us to recognize Pauli  puzzles, as defined in  \ts\ref{S - Puzzle set}, as embedded portions of  $\Delta$.  More precisely, we show that one can assign, to every face in $\Delta$ a Pauli matrix in $X$, $Y$, or $Z$, in an equivariant way, with respect to the cyclic action permutation of Pauli matrices from \ts\ref{S - triangle}. 

We view the groups $U,V,W$ defined in \ts\ref{S - triangle} as subgroups of $G$. Let $T' := (2,3)\in V$. In the group $G$, the element $T'$ commutes to $T$ because $W$ is abelian.  Since $XT=TY$ and since $G$ is generated by the three elements $X$, $T$, $T'$ (choosing a a maximal subtree; we recall that more precisely, a triangle of groups defines a presentation of the group $G$, see \cite[Chap.\ 3.C, Theorem 3.7]{BH}), this  shows that every element $g\in G$ can put in the form
\[
g = T^n g'
\]
where $n=0,1,2$, and $g'$ belongs to the subgroup $G'$ of $G$ generated by $X$, $Y$, $Z$, $T'$. We let $\tau(g)=T^n$ be the corresponding map. This is a group homomorphism onto the subgroup generated by $T$, which extends the semi-direct product decomposition of $U$ to $G$.

Let $\Delta^2$ denote the set of triangles in $\Delta$. 

\begin{theorem}\label{T - Equivariant}
There exists a unique assignment $\pi\colon \Delta^2\to \{X,Y,Z\}$ such that 
\[
\pi(g t) = \tau(g)^{-1}\pi(t)\tau(g)
\]
for every $g\in G$ and $t\in \Delta^2$, and such that $\pi(t_0)=X$, where $t_0$ is the fundamental triangle in $\Delta$. Furthermore, this assignment is invariant under the action of $V$. 
\end{theorem}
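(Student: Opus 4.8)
The plan is to exploit the fact that, because the face group $D$ is trivial, the group $G$ acts \emph{simply transitively} on the set $\Delta^2$ of triangles. Concretely, the development of the triangle of groups identifies $\Delta^2$ with the coset space $G/D = G$, with $t_0$ corresponding to the identity and $G$ acting by left translation; thus every triangle is of the form $g t_0$ for a \emph{unique} $g\in G$. This is the key structural input, and it reduces the theorem to elementary algebra of the homomorphism $\tau\colon G\to\langle T\rangle$ constructed above.

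First I would dispose of uniqueness. If $\pi$ satisfies the equivariance relation and $\pi(t_0)=X$, then writing an arbitrary triangle as $g t_0$ forces
\[
\pi(g t_0)=\tau(g)^{-1}\pi(t_0)\tau(g)=\tau(g)^{-1}X\tau(g);
\]
since $g t_0$ determines $g$ uniquely, $\pi$ is completely determined. Here $\tau(g)=T^n$ is a power of $T$, and conjugation by $T$ realizes the cyclic permutation $X\mapsto Y\mapsto Z$ (this is the content of $XT=TY$), so the right-hand side indeed lies in $\{X,Y,Z\}$.

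For existence I would \emph{define} $\pi(g t_0):=\tau(g)^{-1}X\tau(g)$; this is unambiguous by simple transitivity and takes values in $\{X,Y,Z\}$ by the previous remark. It then remains to verify the equivariance relation itself. Writing $t=h t_0$ and using that $\tau$ is a homomorphism, I would compute
\[
\pi(g h t_0)=(\tau(g)\tau(h))^{-1}X(\tau(g)\tau(h)),\qquad \tau(g)^{-1}\pi(h t_0)\tau(g)=(\tau(h)\tau(g))^{-1}X(\tau(h)\tau(g)),
\]
and observe that these coincide because $\langle T\rangle$ is abelian, so $\tau(g)\tau(h)=\tau(h)\tau(g)$. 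This is the one place where something genuinely must be checked rather than merely unwound, and I expect it to be the crux: the consistency of the assignment hinges on $\tau$ landing in an abelian (in fact cyclic of order $3$) group.

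Finally, for the $V$-invariance I would show that $V\subseteq\ker\tau$. In $G$ the edge-group identifications give $X=(1,2)$ and $T'=(2,3)$, so $V=S_3=\langle(1,2),(2,3)\rangle=\langle X,T'\rangle$. Since both $X$ and $T'$ are among the generators $X,Y,Z,T'$ of $G'$, and $\tau$ vanishes on $G'$ by its very definition, we obtain $\tau|_V=1$. The equivariance relation then yields $\pi(vt)=\tau(v)^{-1}\pi(t)\tau(v)=\pi(t)$ for every $v\in V$ and $t\in\Delta^2$, which is the asserted invariance.
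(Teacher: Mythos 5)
Your proposal is correct and follows essentially the same route as the paper's proof: simple transitivity of $G$ on $\Delta^2$ (from $D=1$) is used to define $\pi(gt_0)=\tau(g)^{-1}X\tau(g)$, equivariance is checked via the homomorphism property of $\tau$, and $V$-invariance comes from $\tau$ vanishing on $V$. If anything, you are slightly more careful than the paper at the one delicate point --- that the twisted equivariance relation is consistent only because the image $\langle T\rangle$ of $\tau$ is abelian, and that $V=\langle X,T'\rangle\subseteq G'=\ker\tau$ --- both of which the paper leaves implicit.
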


\begin{proof}
We first construct the assignment $\pi\colon \Delta^2\to \{X,Y,Z\}$. 
Let $t_0$ denote the fundamental triangle in $\Delta$. For every $t\in \Delta^2$, there exists a unique $g\in G$ such that $g(t_0)=t$ (since $D=1$). Write $g=T^ng'$ where $g'\in G'$ as above. We let
\[
\pi(t) = \begin{cases} X\text{ if } n=0\\ Y\text{ if } n=1\\ Z\text{ if } n=2
\end{cases}
\]
It is clear that $\pi(t_0)=X$, an in general, $\pi(t)=\tau(g)^{-1}X\tau(g)$. If $h\in G$, then
\[
\pi(ht)=\tau(gh)^{-1}X\tau(gh)=\tau(h)^{-1}\pi(t)\tau(h)
\] 
This proves equivariance and invariance under the action of $V$. 
\end{proof}

It follows from  $V$-invariance  that the map $\pi$ is constant on the equilateral triangles of $\Delta$. Furthermore, it is injective on the equilateral triangles adjacent to a given edge, i.e., these triangles belong to pairwise distinct matrices. Indeed, this is true of the edge stabilized by $T$ by equivariance, and therefore of every edge of $X$, since by definition $G$ acts transitively on the corresponding flags.     

Let $p$ be a Pauli puzzle and $H$ be the symmetry group $p$, i.e.,  the subgroup of  isometries of the Euclidean plane which preserves the tessellation by equilateral triangles. Fix a fundamental triangle $t$ for the action of $H$ in a face associated with the matrix $X$ in $p$.

\begin{lemma}There exists a unique embedding $f\colon p\to \Delta$ such that $f(t)=t_0$ which preserves the Pauli matrices.
\end{lemma}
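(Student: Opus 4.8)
The plan is to build the embedding $f$ triangle by triangle, starting from the fundamental triangle $t$ and propagating across shared edges, using the local rigidity supplied by Lemma \ref{L - rings} and the equivariance map $\pi$ of Theorem \ref{T - Equivariant}. First I would set $f(t) = t_0$, which is forced by the hypothesis, and record that $\pi(t_0) = X$ matches the requirement that $t$ lies in an $X$-face of $p$. The key observation is that both $p$ and $\Delta$ are tessellations of the Euclidean plane by equilateral triangles in which every vertex is surrounded by exactly six triangles whose cyclic product of Pauli matrices is trivial; by Lemma \ref{L - rings} the cyclic sequence of matrices around any vertex must be one of the three fundamental rings $XYXZYZ$, $YZYXZX$, $ZXZYXY$ (up to cyclic permutation, inversion, and the choice of starting letter). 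Since $\pi$ is constant on each face of $\Delta$ and injective on the faces adjacent to any edge, the combinatorial Pauli-labelled local picture around a vertex of $\Delta$ is exactly a fundamental ring as well.

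The heart of the argument is a local-extension lemma: given a triangle $s$ of $p$ already mapped isometrically and label-preservingly into $\Delta$, and given a neighbouring triangle $s'$ sharing an edge $e$ with $s$, there is a unique triangle in $\Delta$ adjacent to $f(e)$ carrying the matrix $\pi_p(s')$. Uniqueness here is precisely the injectivity of $\pi$ on triangles adjacent to an edge, established after Theorem \ref{T - Equivariant}; existence follows because $G$ acts transitively on flags, so every one of the two faces across an edge of $\Delta$ is realized with each of the two admissible labels. I would then define $f(s')$ to be this triangle. To see that $f$ is well defined globally — that propagating around a closed loop in $p$ returns the same value — I would use simple connectivity of the Euclidean plane together with agreement around a single vertex: it suffices to check that going around each vertex of $p$ produces a consistent image, and this reduces to matching the ring of labels around a vertex of $p$ with the ring of labels around the corresponding vertex of $\Delta$. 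Both rings are fundamental rings in the sense of Lemma \ref{L - rings}, and once the first two triangles agree, the remaining four are forced letter by letter, since (by Lemma \ref{L - Pauli NPC}) a reduced word of length $4$ occurs at most once across all cyclic permutations of the fundamental rings and their inverses — so no two distinct local completions can share an initial segment of length four.

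The main obstacle I anticipate is exactly this global well-definedness, i.e., promoting the local, edge-by-edge extension into a single consistent map, which is where the nonpositive-curvature content of Lemma \ref{L - Pauli NPC} does the real work. The subtlety is that a priori the propagation could depend on the path taken from $t$ to a given triangle; the standard remedy is to invoke simple connectivity of $E$ and reduce path-independence to the vanishing of the ``monodromy'' around each vertex link, since loops around single vertices generate $\pi_1$ of the $2$-complex relative to its $1$-skeleton. The unambiguous reconstruction of each vertex link from a length-four seed, guaranteed by Lemma \ref{L - Pauli NPC}, closes this gap. Finally, injectivity of $f$ follows because $f$ is a local isometry of tessellations of $E$ that is label-preserving, hence locally injective, and a locally injective simplicial map from the plane that preserves the labelling and matches a fundamental ring around every vertex cannot fold; its image is open and closed in $\Delta$ and therefore all of the flat plane through $t_0$, giving the desired embedding. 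Uniqueness of $f$ is immediate from the local rigidity: any label-preserving simplicial map sending $t$ to $t_0$ must agree with $f$ on every triangle by the same edge-by-edge uniqueness.
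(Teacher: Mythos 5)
Your overall strategy---start from $t\mapsto t_0$, propagate triangle by triangle, use injectivity of $\pi$ on edge-adjacent triangles for the edge step, and invoke Lemma \ref{L - Pauli NPC} for consistency around vertices---is the same inductive construction ($f=\varinjlim f_n$) as the paper's. But there is a genuine gap: you model $\Delta$ as a surface. You write that there are ``two faces across an edge of $\Delta$'' and that ``the combinatorial Pauli-labelled local picture around a vertex of $\Delta$ is exactly a fundamental ring.'' Neither is true: $\Delta$ is a $2$-complex in which every edge is contained in \emph{three} triangles (carrying pairwise distinct Pauli matrices---this is precisely what makes the edge-extension step work, and it is the fact the paper invokes), and the link of a vertex of $\Delta$ is not a hexagon but the Moebius--Kantor graph, a cubic graph on $16$ vertices containing many embedded $6$-cycles. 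Consequently your reduction of path-independence to ``matching the ring of labels around a vertex of $p$ with the ring of labels around the corresponding vertex of $\Delta$'' does not make sense as stated: the real issue at a vertex-only intersection is whether the path of length $4$ or $5$ that the partial map traces in the link of $f(v)$ admits a \emph{unique} completion to a $6$-cycle in the Moebius--Kantor graph with the prescribed labels. This is exactly where the paper uses the local development at $U$ and the girth-$6$ property of that graph (Lemma \ref{L - local development U}), in tandem with Lemma \ref{L - Pauli NPC} on the puzzle side; your argument has no substitute for this step, and Lemma \ref{L - Pauli NPC} alone is a statement about words occurring in rings of $p$, so it does not by itself control the link of $f(v)$ in $\Delta$.

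A secondary flaw of the same origin: your injectivity argument claims the image of $f$ is ``open and closed in $\Delta$ and therefore all of the flat plane through $t_0$.'' Since every edge of $\Delta$ has a third adjacent triangle, the image of $E$ is never open in $\Delta$, so this topological argument collapses. The intended conclusion is that $f$ is a simplicial embedding onto a flat plane inside $\Delta$; the clean repair is either the paper's route, where injectivity is built into the uniqueness of each local extension, or the standard CAT(0) fact that a locally isometric simplicial map from the complete flat simply connected plane $E$ into the CAT(0) complex $\Delta$ is an isometric embedding. With the vertex step repaired via Lemma \ref{L - local development U}, your monodromy/simple-connectivity framing becomes a legitimate variant of the paper's ball-by-ball induction rather than a different proof.
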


\begin{proof}
We construct $f$ by induction, starting from the unique mapping $t\mapsto t_0$ from $p$ into $\Delta$ preserving the angles. Since $\pi$ is invariant under $V$, this defines a unique map $f_0 : p_0 \to \Delta$, where $p_0$ is the (equilateral) face  in $p$ is marked  $X$, such that $f_0(t)=t_0$. For $n\geq 1$ we let $p_n$ denotes the set of faces in $p$ which are adjacent to $p_{n-1}$. We assume $f_{n-1}$ constructed and let $A$ be a face in $p_n\setminus p_{n+1}$ with matrix $M\in \{X,Y,Z\}$. 

Suppose first that the intersection of $A$ and $p_{n-1}$ is an edge $e$ in $p$, this edge contains a unique face $B\in  p_{n-1}$ with matrix $N\neq M$. In $\Delta$,  the edge $f(e)$ is adjacent to three triangles belonging to pairwise distinct matrices. By induction, $f(B)$ is marked $N$ and thus there exists a unique extension of $f_{n-1}$ to $A$ with is consistent on the marking. 

Suppose now that the intersection of $A$ with $p_{n-1}$ is a vertex $v$. In the ring $r$ at $v$, the extension of $f_{n-1}$ determine a path $r'\inj r$ of length 4 or 5 which, by Lemma \ref{L - Pauli NPC},   can be embedded in a ring a unique way. Furthermore, $f$ takes $r'$ to path in the link of $f(v)$. In the local development at $f(v)$ in $\Delta$ (see Lemma \ref{L - local development U}), this path extends in a unique way in the undirected Cayley graph of the Pauli group with respect to the Pauli matrices. This defines a unique extension of $f_{n-1}$ to a map $f_n$ from $p_n$ to $\Delta$. We set $f:=\varinjlim f_n$.   
\end{proof}

Since conversely, an embedding $E\to \Delta$ determines a puzzle, where the Pauli matrices are given by the restriction $\pi_{|E}$ to the image of $E$; we obtain the following result.

\begin{theorem}\label{T - 1-1- correspondence}
There is a 1-1 correspondence between the set of Pauli puzzles pointed at a fundamental triangle $t$ (for the action of $H$), and the set of simplicial embeddings of (the barycentric subdivision of) $E$ into $\Delta$ taking $t$ to the triangle $t_0$.
\end{theorem}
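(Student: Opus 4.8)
The plan is to produce the two maps of the asserted bijection and check that they are mutually inverse. In the forward direction, let $(p,t)$ be a Pauli puzzle pointed at a fundamental triangle $t$ for the action of its symmetry group $H$; as set up above, $t$ lies in a face marked $X$, matching $\pi(t_0)=X$. The preceding lemma furnishes a unique matrix-preserving embedding $f\colon p\to\Delta$ with $f(t)=t_0$. Since $p$ is, by definition, a tessellation of $E$ by equilateral triangles and $f$ respects the cell structure in a way compatible with the scwol structure of $\Delta$, the map $f$ is precisely a simplicial embedding of (the barycentric subdivision of) $E$ into $\Delta$ carrying $t$ to $t_0$. This assigns to each pointed puzzle such an embedding.

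In the reverse direction, given a simplicial embedding $\phi\colon E\to\Delta$ with $\phi(t)=t_0$, I would mark each triangle $\sigma$ of $E$ with the matrix $\pi(\phi(\sigma))\in\{X,Y,Z\}$ provided by Theorem \ref{T - Equivariant}, and verify the two conditions of \ts\ref{S - Puzzle set}. Condition~(1), that adjacent faces carry distinct matrices, is immediate from the injectivity of $\pi$ on the triangles sharing a common edge of $\Delta$ (noted after Theorem \ref{T - Equivariant}). Condition~(2), triviality of the cyclic product around each vertex, is where the structure of $\Delta$ is used: the six faces of $E$ incident to a vertex $v$ map to six faces around $\phi(v)$, which trace a closed $6$-cycle in the link of $\phi(v)$; by Lemma \ref{L - local development U} this link is (the barycentric subdivision of) the undirected Cayley graph of the Pauli group with respect to $X,Y,Z$, so reading the face-matrices along the cycle spells a closed edge-word, necessarily equal to the identity in the Pauli group. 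One recognises here exactly the length-$6$ relations of Lemma \ref{L - rings}.

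It then remains to check that the two constructions invert one another. Composing forward then backward recovers the original marking of $p$, because $f$ preserves the Pauli matrices, so $\sigma\mapsto\pi(f(\sigma))$ is the original assignment. Composing backward then forward recovers $\phi$: the embedding $\phi$ is itself a matrix-preserving simplicial embedding of the puzzle it determines, sending $t$ to $t_0$, so by the uniqueness clause of the preceding lemma it must coincide with the embedding that lemma produces from that puzzle. The pointings correspond under $t\leftrightarrow t_0$, completing the bijection.

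I expect the one genuinely substantive point to be Condition~(2) in the reverse direction: identifying the ring of six faces around a vertex of $E$ with a closed cycle in the link of its image and confirming, via Lemma \ref{L - local development U}, that this link really is the Cayley graph of the Pauli group, so that closed cycles read off as relations. Everything else is bookkeeping built on the uniqueness and matrix-preservation already established in the preceding lemma and in Theorem \ref{T - Equivariant}.
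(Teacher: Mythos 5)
Your proposal is correct and follows essentially the same route as the paper: the forward map is exactly the unique matrix-preserving embedding from the preceding lemma, the inverse is the marking $\sigma\mapsto\pi(\phi(\sigma))$ given by restricting $\pi$ to the image (which is precisely how the paper phrases the converse), and mutual inversion via the uniqueness clause is the intended bookkeeping. Your explicit verification of conditions (1) and (2) --- injectivity of $\pi$ on the three triangles at an edge, and closed $6$-cycles in the Moebius--Kantor link reading off as relations in the Pauli group via Lemma \ref{L - local development U} --- is a correct expansion of what the paper compresses into a single sentence, the needed compatibility being already implicit in the inductive proof of the preceding lemma.
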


The paper \cite{autf2puzzles} introduced ring puzzles as a mean to prove the so-called ``flat closing conjecture'' for certain groups for which the classification theorem is available. Combining Theorem \ref{T - Pauli classification 2} with the techniques of  \cite{autf2puzzles}, it is not difficult to give an abstract proof that $G$ contains an copy of $\ZI^2$.
This can, of course, be proved directly. A way to do it is to use Theorem \ref{T - 1-1- correspondence}. The group $H$ contains torsion free subgroups of finite index generated by two translations in distinct directions. Consider for example the following fundamental domain:
\begin{figure}[H]
\includegraphics[width=4cm]{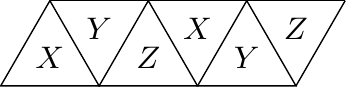}
\end{figure}

If $H_0\subset H$ is a subgroup isomorphic to $\ZI^2$, an $H_0$-periodic puzzle is a pauli Puzzle whose Pauli matrices are invariant under the action of $H_0$; for such a puzzle, $H_0$ embeds into $G$ (conversely, a subgroup $H_0$ isomorphic to $\ZI^2$ in $G$ defines a periodic $H_0$ puzzle in $G$, which is a consequence of the flat torus theorem \cite[II.7]{BH} and show that the correspondence in Theorem \ref{T - 1-1- correspondence} takes periodic puzzles to periodic planes). It is easy to see that the fundamental domain shown above extends into an $H_0$-periodic puzzle, where $H_0$ is the group generated by $(3,0)$ and $(0,1)$ in the obvious basis.

\section{$T$-strips}\label{S - T strips}

Let $p$ be Pauli puzzle and  $s$ be a 1-strip, which can be finite, semi-infinite, or bi-infinite, in $p$.

 We call $s$ a \emph{$T$-strip} if if it contains at least three triangles and every triple of consecutive triangles  contains the three Pauli matrices. Thus, the sequence of matrices along a  $T$-strip are obtained by applying $T$ in one direction and  $T^2$ in the other.

In order to prove Theorem \ref{T - Pauli classifiable} we  establish in this section a series of lemmas on $T$-strips.

 A \emph{maximal semi-infinite $T$-strip} a semi-infinite 1-strip $s$ which is not a $T$-strip,  such that $s\setminus t$ is, where $t$ is the initial triangle of $s$. 
  Here and below, we use the following convention on the difference set for 1-strips: if $s$ is a 1-strip and $t$ is a 1-strip included in  $s$, then $s\setminus t$ denotes the closure of the set theoretic difference of $s$ and $t$;  this is the union of all  (closed) triangles in $s$ which are not in $t$. Thus, $s\setminus t$ is either a 1-strip, or a union of two disjoint 1-strips. If $t$ is the initial triangle of $s$, then $s\setminus t$ is itself a 1-strip.

\begin{lemma}\label{L - semi infinite}
A maximal semi-infinite $T$-strip $s$ admits a unique extension into a sector  of $p$ which is the convex hull of $s$ and a semi-infinite $T$-strip $s'$   intersecting $s$ with an  angle value $\pi/3$.      
\end{lemma}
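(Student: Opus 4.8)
The plan is to show that the Pauli-puzzle constraints \emph{force} the triangles of the sector one at a time, starting from the initial defect of $s$, and that the resulting forced region is exactly a $\pi/3$ wedge whose far edge is a genuine $T$-strip. First I would normalize using the cyclic symmetry $T$ so that the initial triangle $t$ carries the matrix $X$. Writing $s=t,a_1,a_2,a_3,\dots$, the hypothesis that $s$ is not a $T$-strip while $s\setminus t$ is forces the matrices of $a_1,a_2,a_3,\dots$ to follow the $T$-pattern while $\{\pi(t),\pi(a_1),\pi(a_2)\}$ omits one matrix; since $t$ is adjacent to $a_1$, this is possible only if $\pi(t)=\pi(a_2)$. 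Hence the three consecutive triangles $t,a_1,a_2$ meet at a common vertex $v^{*}$ and contribute there the \emph{palindrome} $\pi(t)\pi(a_1)\pi(a_2)=MNM$ with $M\neq N$.

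The base case is the ring at $v^{*}$. As a preliminary refinement of Lemma \ref{L - Pauli NPC}, I would record that each of the six palindromes $MNM$ ($M\neq N$) occurs in exactly one cyclic ring word of Lemma \ref{L - rings}; consequently the ring at $v^{*}$ is completely determined by $MNM$. This fixes the three triangles of $p$ on the far side of $v^{*}$, one of which is adjacent to $t$ and, by the ring word, carries the matrix $T(\pi(t))$. That triangle is the second triangle of $s'$, and it simultaneously pins the apex of the sector at $t$ and the direction of $s'$. The resulting angle is $\pi/3$ because $s$ and $s'$ share only $t$ and their strip axes differ by a single lattice direction.

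With the apex configuration fixed, I would fill the wedge by induction on combinatorial distance from the apex, in exactly the style of the embedding lemma preceding Theorem \ref{T - 1-1- correspondence}. At each stage the already-determined part of the sector presents, around the next frontier vertex, a path in the ring at that vertex; whenever this path has length at least $4$, Lemma \ref{L - Pauli NPC} guarantees it extends uniquely inside the ring word, which forces the matrix of the next triangle (at the apex, the length-$3$ palindrome observation supplies uniqueness instead). Since every triangle of $p$ already carries a well-defined matrix, the induction proves that these matrices are \emph{compelled} by $s$, hence the extension is unique; equivalently one may run the whole argument inside $\Delta$ via Theorem \ref{T - 1-1- correspondence}, where unique extension of a length-$4$ path is precisely the girth-$6$ property of the Moebius--Kantor graph from Lemma \ref{L - local development U}. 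I would organize the induction along layers parallel to $s$, showing that the $n$-th layer is a semi-infinite strip whose initial triangle is the $n$-th triangle of $s'$ and is itself a $T$-strip (its matrices cycle under $T$ because conjugation by $\tau$ along the layer applies $T$); the union of the layers is the convex hull of $s$ and $s'$, and the outer edge $s'$ is a semi-infinite $T$-strip meeting $s$ at angle $\pi/3$.

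The main obstacle is the frontier bookkeeping. One must verify that at \emph{every} forcing step the determined triangles around the relevant vertex really form a reduced path of length $\ge 4$ (or a palindrome of length $3$ at the apex), so that Lemma \ref{L - Pauli NPC} applies and the extension is unambiguous: a non-palindromic length-$3$ window is genuinely ambiguous (for instance $YXZ$ occurs in two distinct ring words), so the induction must be arranged never to rely on such a window. The semi-infinite end of $s$ is what keeps the region a cone rather than a half-plane: on the apex side the frontier vertex has one fewer determined neighbour, so the determined region advances by exactly one triangle per layer there, tracing a single lattice ray from $t$ and giving the straight edge $s'$. Establishing that the frontier retains this property throughout --- equivalently that the filled region stays a straight-edged $\pi/3$ cone and that $s'$ remains a single $T$-strip with no new defect --- is the heart of the argument; the rest is the local ring combinatorics already packaged in Lemmas \ref{L - rings} and \ref{L - Pauli NPC}.
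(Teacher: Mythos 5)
Your proposal is correct and follows essentially the same route as the paper: the paper's proof likewise deduces from maximality that the initial triangle and the triangle meeting it in the vertex $v$ carry the same matrix (your palindrome $MNM$), extends this uniquely to a hexagon at $v$ via Lemma \ref{L - rings}, then propagates by Lemma \ref{L - Pauli NPC} to a parallel strip $s_1$, which is again a maximal semi-infinite $T$-strip, and inducts on layers $s_n$ to obtain the sector with boundary $s'$ at angle $\pi/3$. Your explicit check that each palindrome $MNM$ lies in exactly one cyclic ring word (while non-palindromic length-$3$ windows such as $YXZ$ are ambiguous) is precisely the content implicitly invoked when the paper asserts the hexagon at $v$ is unique, so your added bookkeeping is a faithful elaboration rather than a different argument.
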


\begin{proof}
Since $s$ is maximal, the matrix on the initial triangle $t$ coincides with that on the triangle $t'$  of $s$ such that  $t\cap t'=\{v\}$. By Lemma \ref{L - rings}, $t$ and extend into a unique hexagon of $p$ centered at $v$, which, by Lemma \ref{L - Pauli NPC}, extends into a unique strip $s_1$ parallel to $s_0:=s$. It follows that $s_1$ is again a maximal semi-infinite $T$-strip. By induction, we define in this way a sequence of parallel strips $s_n$,  and a sector, the convex hull of $\bigcup _{n\geq 0} s_n$, whose boundary 1-strip $s'\neq s$ forms an angle $\pi/3$ with $s$. It is easy to check that $s'$ is again a $T$-strip.
 \end{proof}

A \emph{maximal  finite} $T$-strip is a  finite 1-strip $s$ such that $s\setminus t$ and $s\setminus t'$ are not $T$-strip, but $s_{-1}:=s\setminus t\vee t'$ is, where $t$ (resp.\ $t'$) is the initial (resp.\ final) triangle of $s$.

\begin{lemma}\label{L - reflection T strip}
Suppose $I$ is a simplicial segment in $p$ of length at least 3 and $s$ is a $T$-strip on $I$. Let $\overline s$ be the reflection of $s$ in $p$ along $I$. Then $\overline s_{-1}$ is a $T$-strip. 
\end{lemma}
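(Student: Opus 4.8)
The plan is to extract, at each interior vertex of $I$, the constraint imposed by the ring condition (condition~(2) of \S\ref{S - Puzzle set}), and then to propagate these constraints along $\overline{s}$.

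First I would fix the geometry. Write $I=[v_0,v_1,\dots,v_k]$ with $k\geq 3$; the line carrying $I$ separates the plane into two half-planes, $s$ lying on one side and $\overline{s}$ on the other. Around each \emph{interior} vertex $v_i$ ($1\leq i\leq k-1$) there are six triangles: exactly three of them are consecutive triangles of $s$ (on the $s$-side) and the three opposite ones are consecutive triangles of $\overline{s}$ (on the $\overline{s}$-side). Reading the six matrices cyclically around $v_i$ gives a word $r_1r_2r_3r_4r_5r_6$ in which $r_1r_2r_3$ are the matrices of the three $s$-triangles, read against the strip direction, and $r_4r_5r_6$ are those of the three $\overline{s}$-triangles, read along it. Condition~(2) says $r_1r_2r_3r_4r_5r_6=1$ in $P$, and, because $s$ is a $T$-strip, $r_1r_2r_3$ is a full $T$-progression, i.e. a permutation of $\{X,Y,Z\}$.

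The heart of the matter is the local completion step. A reduced product of three Pauli letters is a central element ($\pm i$) if and only if the three letters are distinct: the only other reduced pattern is the palindromic $aba$, whose product one computes to be $\pm$ a single Pauli matrix, hence non-central; moreover the three increasing $T$-progressions have product $i=XYZ$ and the three decreasing ones product $-i$. Since $r_1r_2r_3$ is a $T$-progression its product is central, so the relation $r_1r_2r_3r_4r_5r_6=1$ forces $r_4r_5r_6$ to equal the inverse central unit; by the previous sentence \emph{every} admissible value of $r_4r_5r_6$ is again a full $T$-progression, and tracking the sign of the central unit (together with the fact that $r_1r_2r_3$ is read against the strip orientation while $r_4r_5r_6$ is read along it) shows this progression runs in the same direction as $s$. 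Equivalently one may invoke Lemma \ref{L - rings}: inspecting the finitely many admissible rings that contain a prescribed $T$-progression among three consecutive entries shows the complementary entries always form a $T$-progression of matching direction, and the $T$-equivariance of \S\ref{S - triangle} reduces this to the single case $r_1r_2r_3=XYZ$. Thus at every interior vertex the triple of $\overline{s}$-matrices is a same-direction $T$-progression.

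Finally I would glue these local triples. Consecutive interior vertices $v_i,v_{i+1}$ share exactly one $\overline{s}$-triangle, and since $s$ is a $T$-strip all the local progressions point the same way; feeding the shared entry into both progressions forces the matrices along $\overline{s}$ to form one monotone $T$-progression over the entire portion swept out by $v_1,\dots,v_{k-1}$, so that every three consecutive triangles of this portion carry all three matrices, i.e. it is a $T$-strip. At worst, the two extreme triangles of $\overline{s}$ meet $I$ only at the endpoints $v_0$ or $v_k$ (this occurs precisely when $s$ terminates in a triangle overhanging $I$), and there the ring also involves triangles outside $s\cup\overline{s}$ and imposes no usable constraint; discarding both extreme triangles in every case leaves exactly the portion swept by the interior vertices, which is $\overline{s}_{-1}$. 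This proves the claim. I expect the completion step to be the only genuine obstacle, since the propagation and endpoint bookkeeping are routine once direction-matching is in hand; the delicate point is to verify that \emph{no} admissible completion of a $T$-progression fails to be a $T$-progression, which is exactly what rules out a defect appearing in the interior of $\overline{s}$.
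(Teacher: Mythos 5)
Your proof is correct, and it takes a somewhat different route from the paper's. The paper argues by \emph{seed and unique propagation}: Lemma \ref{L - rings} forces the three triangles of $\overline{s}$ containing the first two edges of $I$ to carry distinct Pauli matrices, Lemma \ref{L - Pauli NPC} (each reduced length-$4$ word occurs in at most one ring) then extends this datum in a unique way along the strip, and one checks that the extension is given by $T$ or $T^2$, necessarily the same transformation as for $s$. You instead determine the triple of $\overline{s}$-matrices at \emph{every} interior vertex of $I$ independently, via the algebraic observation that a reduced three-letter product of Pauli matrices is central exactly when the letters are distinct (increasing progressions give $i$, decreasing ones $-i$, palindromes $aba$ give $\pm$ a single Pauli matrix), and then glue the overlapping triples along $\overline{s}$; your sign bookkeeping, combined with the opposite reading orientations of the two sides of $I$ in the cyclic word at a vertex, is precisely what yields the direction-matching that the paper dispatches as a ``straightforward check''. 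Your route avoids Lemma \ref{L - Pauli NPC} altogether at the cost of an explicit computation in the Pauli group, and it has the merit of making the same-direction claim fully verifiable; the paper's version reuses a lemma employed throughout the classification and is shorter. One small slip of bookkeeping at the end: when $s$ has no overhanging end triangles, the portion of $\overline{s}$ swept by the interior vertices is all of $\overline{s}$, not $\overline{s}_{-1}$; this is harmless, since $\overline{s}_{-1}$ is then a connected substrip, with $2k-3\geq 3$ triangles when $I$ has length $k\geq 3$, of a strip you have already shown to be a $T$-strip, hence is itself a $T$-strip.
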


\begin{proof}
By Lemma \ref{L - rings}, since this is true for $s$, the three triangles in $\overline s$ containing the first two edges in $I$ are associated with distinct Pauli matrices. By Lemma \ref{L - Pauli NPC}, the extension of $\overline s_{-1}$ is uniquely determined by $s$ and these three matrices. It is then straightforward to check that the extension is given by $T$ or $T^2$, depending on which transformation $s$ is associated with; in fact, it must be the same transformation for both strips.
\end{proof}

\begin{lemma} Suppose $s$ is a maximal finite $T$-strip. Then the bases of $t$ and $t'$ belong to opposite sides of $s$.
\end{lemma}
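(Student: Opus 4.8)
The plan is to reduce the statement to a parity assertion about the number $n$ of triangles of $s$, and then to rule out the bad parity using the ring conditions along one side of $s$.

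First I would fix notation: write $a_1,\dots,a_n\in\{X,Y,Z\}$ for the matrices carried by the successive triangles of $s$, with $t$ the triangle $a_1$ and $t'$ the triangle $a_n$. Since consecutive triangles of a $1$-strip share an edge, their orientations alternate, so the base of the $k$-th triangle lies on a side of $s$ depending only on the parity of $k$. Hence the bases of $t$ and $t'$ lie on opposite sides of $s$ \emph{if and only if $n$ is even}, and it suffices to prove that $n$ is even. Next I would record the end conditions. By the definition of a maximal finite $T$-strip, $s_{-1}$ is a $T$-strip while $s\setminus t$ and $s\setminus t'$ are not; since deleting $t'$ (resp.\ $t$) restores the $T$-strip property, the first failing triple occurs at the corresponding end, and because consecutive triangles carry distinct matrices this forces $a_1=a_3$ and $a_n=a_{n-2}$. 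Geometrically, $t$ meets the triangle $a_3$ in a single vertex $v_0$, which lies on the side of $s$ carrying the base of $t$; likewise $a_n=a_{n-2}$ produces a vertex $v_1$ on the side carrying the base of $t'$.

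Now I would argue by contradiction, supposing that the bases of $t$ and $t'$ lie on the same side $S$ (equivalently $n$ odd), so that both $v_0$ and $v_1$ lie on $S$. I would analyse the row $s'$ of triangles of $p$ adjacent to $s$ across $S$. At every vertex of $S$ interior to the strip the six triangles meeting there form one of the rings of Lemma~\ref{L - rings}, the three inner ones belonging to $s$ and the three outer ones to $s'$. At the two defect vertices $v_0,v_1$ the repeated matrix forces, by Lemma~\ref{L - rings} together with the no-adjacent-repetition condition~(1), a \emph{unique} completion, hence a definite matrix on the triangle of $s'$ abutting $a_3$ (resp.\ $a_{n-2}$); at each intermediate vertex, where the three inner matrices are distinct, Lemma~\ref{L - Pauli NPC} shows that fixing one outer matrix (the one shared with the previous vertex) determines the whole ring, so the labels of $s'$ propagate uniquely along $S$ from $v_0$ towards $v_1$.

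The final, decisive step is the computation that these two prescriptions are incompatible. Reading the labels in $\Z/3\Z$ (with $X,Y,Z\mapsto 0,1,2$ and $T\mapsto{+}1$), a short check gives that each intermediate vertex shifts the outer label by a fixed amount, that the defect at $v_0$ pins the label abutting $a_3$ to $a_2+2$, and that the defect at $v_1$ pins the label abutting $a_{n-2}$ to $a_{n-2}+2$; comparing the value propagated from $v_0$ through the $(n-5)/2$ intermediate vertices with the value forced at $v_1$ yields a discrepancy that is a \emph{nonzero} residue modulo $3$, independent of $n$. This contradiction shows $n$ is even, as required. The main obstacle is exactly this last paragraph: one must verify that the completions at $v_0$ and $v_1$ are genuinely forced (here the explicit rings of Lemma~\ref{L - rings} and the nonpositive-curvature Lemma~\ref{L - Pauli NPC} are essential) and that the two independently determined labels on the $s'$-triangle next to $a_{n-2}$ disagree. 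I expect Lemma~\ref{L - reflection T strip} to streamline the propagation, since it already identifies $s'_{-1}$ as a $T$-strip with the same associated transformation as $s$; an alternative to the explicit $\Z/3\Z$ count is to note that two defects on the same side would exhibit $s'$ as a maximal finite $T$-strip of length $n-2$ with its distinguished vertices again on one side, setting up a descent whose base case $n=5$ (the shortest possibility, since $s_{-1}$ must have at least three triangles) is checked directly.
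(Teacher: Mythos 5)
Your proof is correct and follows essentially the same route as the paper: assuming both bases lie on the same side, you develop the adjacent row across that side, where uniqueness of the propagation rests on Lemma~\ref{L - Pauli NPC} and Lemma~\ref{L - reflection T strip} exactly as in the paper's reflection argument, and you derive a clash between the configurations forced at the two end defects --- the paper packages this final clash as a forbidden $(MN)^2$ subword of the ring words of Lemma~\ref{L - rings}, while your mod~$3$ count is the same obstruction in different bookkeeping (I checked your residues: they are right for the transformation $T$, and the $T^2$ case follows by mirror symmetry). Two harmless caveats: the pinned values $a_2+2$ and $a_{n-2}+2$ should carry that orientation proviso, and in your optional descent the mirrored strip actually has length $n$ again rather than $n-2$ (the repeats sit under the bases of $t$ and $t'$), so that variant does not descend --- but your primary argument is complete without it.
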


\begin{proof}
Suppose $t$ and $t'$ are on the same side $I$ of $s$.  Let $\overline s$ denote the reflection of $s$ with respect to $I$ in $p$.  Since  $s_{-1}$ is a $T$-strip, so is $\overline s_{-2}:=(\overline s_{-1})_{-1}$. Furthermore, the matrix sequence on $\overline s_{-2}$ is given by applying $T$ or $T^2$ depending on the transformation in $s_{-1}$. This implies that the matrix associated with a triangle in $\overline s_{-2}$, sitting on an edge $e$ of $I$ from $t$ to $t'$, is the same as that associated with the triangle in $s_{-1}$ containing the extremity of $e$. In particular, the matrix associated with the triangle in $\overline s_{-2}$ sitting on the last edge $e$ coincides with that associated with the triangle in $s$ containing the extremity of $e$. Now, if $s\setminus t$ is not a $T$-strip, but $s_{-1}$ is, then $t'$ and the triangle in $s$ sitting on the edge $e$ share the same Pauli matrix.  This contradicts Lemma \ref{L - rings}, more precisely, the fact the reduced words of the form $(MN)^2$ in $\ZI/2\ZI*\ZI/2\ZI*\ZI/2\ZI$, where $(M,N)$ is a pair of distinct Pauli matrices, are not subwords of the words described in this lemma.
\end{proof}

The following is an obvious consequence.

\begin{lemma}
A maximal finite $T$-strip is isometric to a parallelogram, and contains an even number $n$ of triangles. 
\end{lemma}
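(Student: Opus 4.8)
The plan is to deduce this final statement directly from the preceding lemma, which asserts that the bases of the initial triangle $t$ and final triangle $t'$ of a maximal finite $T$-strip $s$ lie on \emph{opposite} sides of $s$. First I would fix notation: a $1$-strip is a linear sequence of equilateral triangles glued edge-to-edge, alternating orientation, so that its two long sides (the two \emph{boundary lines}) are parallel segments of the tessellation, and each triangle has its \emph{base} on one of the two sides and its opposite vertex on the other. In the interior of the strip the triangles zigzag between the two sides. The content of the previous lemma is precisely that the base of $t$ sits on one boundary line and the base of $t'$ sits on the other.

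Next I would argue that this opposite-sides condition forces the strip to be a parallelogram rather than, say, a trapezoid or a triangle. A finite $1$-strip is the convex hull of its triangles; its boundary consists of the two parallel sides together with the two ``end'' edges, one at each extremity. The end edge at $t$ is the edge of $t$ \emph{not} shared with the next triangle, and similarly at $t'$. When the base of $t$ lies on one side and the base of $t'$ lies on the opposite side, the two end edges are the two non-base edges that close off the region, and a short inspection shows they are parallel (each making the same angle $\pi/3$ with the boundary lines, but on opposite ends, hence the figure closes up as a parallelogram). Concretely, since all triangles are equilateral and the two long sides are parallel, the only way for the two extremal edges to be the single-edge caps at each end \emph{with the bases on opposite sides} is for the boundary quadrilateral to have both pairs of opposite sides parallel.

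For the parity statement I would count triangles along the strip by tracking which side each successive base lies on. Consecutive triangles in a $1$-strip have their bases on opposite sides (this is exactly the zigzag/alternation structure of a strip), so the base-side alternates at each step. Hence if $s$ has $n$ triangles, the base of the $k$-th triangle lies on side $0$ when $k$ is odd and on side $1$ when $k$ is even (say). The initial triangle $t$ is the first and $t'$ is the last. The previous lemma says their bases are on opposite sides; by the alternation, opposite sides occur exactly when the indices $1$ and $n$ have opposite parity, i.e.\ when $n$ is even. Therefore $n$ is even, completing the argument.

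The only genuinely delicate point — and the one I would present most carefully — is the passage from ``bases on opposite sides'' to ``isometric to a parallelogram,'' since the shape of the two capping edges at the extremities is what determines whether the boundary is a parallelogram; but given that every triangle is equilateral and the two long sides are parallel lines of the standard tessellation $E$, this is a direct consequence of Euclidean geometry and requires no further appeal to the Pauli relations. In fact everything here is geometry of the strip together with the already-established opposite-sides lemma, which is why the authors can honestly call this ``an obvious consequence.'' I would keep the write-up short, stating the base-alternation explicitly, invoking the previous lemma for the opposite-sides conclusion, and reading off both the parallelogram shape and the parity of $n$ from that.
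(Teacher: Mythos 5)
Your proof is correct and follows exactly the paper's intended route: the paper offers no written proof, presenting the lemma as ``an obvious consequence'' of the preceding opposite-sides lemma, and your write-up simply makes that deduction explicit (base-side alternation along the strip forces $n$ even, and the two end edges are then parallel, yielding a parallelogram). Nothing is missing; your spelled-out version is a faithful expansion of what the authors left implicit.
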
 

In addition, one can describe the transverse structure.

\begin{lemma}\label{L - finite T strip} Suppose that $s$ is maximal finite $T$-strip. Then $s$ admits a unique extension into a bi-infinite strip transverse to $s$, of width $\frac n 2+1$, which is the union of that many parallel bi-infinite $T$-strips.
\end{lemma}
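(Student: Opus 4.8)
The plan is to mimic the proof of Lemma \ref{L - semi infinite}, but to exploit both ends of $s$ rather than one. Write $s = t_1\cdots t_n$ with $t = t_1$ and $t' = t_n$, so that the middle part $s_{-1} = t_2\cdots t_{n-1}$ is a $T$-strip while $s\setminus t$ and $s\setminus t'$ are not. Since adjacent tiles carry distinct matrices (the first defining condition of a Pauli puzzle), the only way $s\setminus t'$ and $s\setminus t$ can fail to be $T$-strips is that $t_1$ and $t_3$ carry the same matrix, and likewise $t_n$ and $t_{n-2}$. The tiles $t_1,t_2,t_3$ surround a common vertex $v$ on one long side of $s$, with matrices forming a pattern $B,A,B$; by the lemma on opposite sides, the tiles $t_{n-2},t_{n-1},t_n$ surround a vertex $w$ on the opposite long side, again in a pattern of the shape $B',A',B'$.

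Next I would complete the two corner rings. By Lemma \ref{L - rings}, a length-$3$ pattern $BAB$ (with $A\neq B$) occurs in exactly one of the fundamental ring words, as a sub-arc of a single hexagon; hence the full ring at $v$ is determined, and its three tiles lying outside $s$ carry matrices $C,A,C$, where $C$ is the third Pauli matrix. The same holds at $w$. This is precisely the hexagon-completion step used in the proof of Lemma \ref{L - semi infinite}.

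The main construction is the propagation. The forced hexagon at $v$, together with the non-positive-curvature Lemma \ref{L - Pauli NPC}, determines all tiles adjacent to $s$ on the side of $v$, and I claim that these assemble into a copy of $s$ parallel to it, shifted by one step, which is again a maximal finite $T$-strip with the same number $n$ of tiles; symmetrically the hexagon at $w$ produces such a copy on the other side. Iterating on both sides yields a bi-infinite family of parallel, one-step-shifted maximal finite $T$-strips, whose union is a strip transverse to $s$. Because the shift between consecutive copies is constant and equal to one step, the two lateral boundaries are straight and the width is constant; a direct count gives $\frac n2 + 1$, since the $n/2$ rhombi of a single row, offset by one between successive rows, occupy $n/2+1$ transverse positions. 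Reading the matrices along a transverse track, the one-step shift between successive rows means the matrix advances by $T$ at each step, so each of the $\frac n2+1$ transverse tracks is a bi-infinite $T$-strip. Uniqueness of the whole extension follows because Lemma \ref{L - Pauli NPC} forces a unique matrix on each newly added tile.

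The hard part is the claim in the previous paragraph: that the forced tiles really close up into a congruent copy of $s$ of length $n$ with a constant one-step shift, rather than into a region that grows, shrinks, or ceases to be a $T$-strip at its ends. Establishing this requires tracking the matrices simultaneously through the two corner hexagons and checking that the forcings at the two ends are compatible and reproduce the repeated-matrix pattern ($t_1=t_3$ and $t_n=t_{n-2}$) for the new copy, so that it is again maximal finite. Once this single inductive step is secured, the band, its width $\frac n2+1$, and its decomposition into parallel $T$-strips all follow formally, exactly as the sector did in Lemma \ref{L - semi infinite}.
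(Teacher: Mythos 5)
Your proposal follows essentially the same route as the paper's proof: the paper likewise starts from the opposite-sides lemma, runs ``an argument similar to that of Lemma~\ref{L - semi infinite}'' (your corner-hexagon completion via Lemma~\ref{L - rings} together with propagation forced by Lemma~\ref{L - Pauli NPC}) to produce the unique transverse band of translates of width $\frac n2+1$, and concludes that the transverse tracks are $T$-strips because the matrices on successive translates are shifted by $T$ or $T^2$ and hence are periodic of period $3$. The ``hard part'' you flag---that each forced row closes up into a congruent, one-step-shifted, again maximal finite copy of $s$---is precisely the step the paper itself compresses into its one-line appeal to Lemma~\ref{L - semi infinite}, so your write-up matches the paper's proof in both strategy and level of detail.
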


\begin{proof} By a previous lemma, $t$ and $t'$ belong to opposite sides of $s$.
An argument similar to that of Lemma  \ref{L - semi infinite} shows that there exists a unique extension $s'\supset s$ such that $s=s'_{-1}$, such that the bi-infinite strip of width  $\frac n 2+1$ formed of translates extends $s$ in a unique way transversally on both sides. The matrices associated with these translates are shifted by $T$ or $T^2$, and threfore are  periodic of period 3. It follows that the bi-infinite strips in the transverse direction are $T$-strips.    
\end{proof}

\section{Proof of Theorem \ref{T - Pauli classifiable}}\label{S - proof}

Let $p$ be a Pauli puzzle. Suppose that $p$ contains a bi-infinite $T$-strip $s$. Let $I$ be a segment included in the boundary of $s$. By Lemma \ref{L - reflection T strip}, the reflection $\overline{s_{I}}_{-1}$ is a $T$-strip. Since the length of $I$ is arbitrary, so is $\overline s$. It follows by induction that $p$ is a union of $T$-strips.

We suppose, for the remainder of this section, that $p$ does not contain a bi-infinite $T$-strip. 

Observe that by applying Lemma \ref{L - rings} and Lemma \ref{L - finite T strip}, the Pauli puzzle $p$ contains a maximal semi-infinite $T$-strip.  Indeed, every triangle $t$ belongs to $T$-strip $s_0$; let $s$ be a maximal $T$-strip containing $s_0$. Then $s$ is semi-infinite: if it were finite, then Lemma \ref{L - finite T strip} would imply that $t$ belongs to a bi-infinite $T$-strip transverse to $s$.

\begin{lemma}\label{L - excluding a Pauli matrix}
Suppose $t_0$ is a triangle, $s$ is a maximal semi-infinite $T$-strip containing $t_0$,  $r$ is the bi-infinite 1-strip in $p$ containing $s$. Then there exists an integer $n\geq 3$, a maximal semi-infinite $T$-strip $s'\neq s$ included in $r$, such that $r\setminus (s_{-3}\cup s'_{-3})$ is a 1-strip of length $n$ which excludes one of the Pauli matrices, where for a semi-infinite strip $s$, we use the notation $s_{-k}$ to refer to $s\setminus (t_1\cup  \cdots \cup t_k)$, where the $t_i$'s are the $k$ initial triangles in $s$. 
\end{lemma}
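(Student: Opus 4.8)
The plan is to analyze the two boundary $T$-strips of the bi-infinite $1$-strip $r$ and show that once their initial triangles are trimmed away, the residual strip is forced to avoid one Pauli matrix. First I would set up the geometry: since $p$ contains no bi-infinite $T$-strip, the strip $s$ is a \emph{maximal} semi-infinite $T$-strip, so by definition $s$ fails to be a $T$-strip only because of its initial triangle $t$, i.e.\ the matrix on $t$ coincides with that on the triangle $t'$ of $s$ meeting $t$ at the apex vertex $v$. The bi-infinite $1$-strip $r$ containing $s$ has two boundary edges; $s$ lies along one of them. I would argue that $r$ must contain a second maximal semi-infinite $T$-strip $s'\neq s$ running in the opposite direction along the other boundary: by Lemma \ref{L - rings} and the local analysis used in Lemma \ref{L - finite T strip}, the transverse structure of a $T$-strip is rigid, and a maximal semi-infinite $T$-strip cannot extend to a bi-infinite one (that is excluded by hypothesis), so the $T$-strip structure inside $r$ must terminate at both ends, producing $s'$.

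Next I would examine what happens near the two ``turning regions'' where the $T$-periodicity of the matrix sequence along $r$ breaks. The key mechanism is the same hexagon/ring argument as in Lemma \ref{L - semi infinite}: at the apex vertex $v$ of $s$, the repeated matrix forces a specific local configuration, and by Lemma \ref{L - Pauli NPC} the ring at $v$ is completed uniquely. The point of removing the first three triangles ($s_{-3}$, and likewise $s'_{-3}$) is to strip off exactly this transitional part where the sequence deviates from pure $T$-translation, leaving a central portion of $r$ whose matrix pattern is completely determined. I would compute, using the explicit reduced words of Lemma \ref{L - rings} and their cyclic permutations, the matrix sequence along the middle $1$-strip $r\setminus(s_{-3}\cup s'_{-3})$; because both ends are now ``post-transition'' and the interior obeys the constraints at each vertex, the sequence can involve only two of the three Pauli matrices, so one matrix is excluded. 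This also pins down that the residual strip has some finite length $n$, and the constraint that it genuinely sits between two distinct transitions forces $n\geq 3$.

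The main obstacle, I expect, is establishing that the excluded-matrix phenomenon is \emph{global} along the whole residual strip rather than merely local near one end. Locally, Lemma \ref{L - Pauli NPC} guarantees unique extension of a length-$4$ or length-$5$ path in each vertex ring, but I must propagate this from one end of $r$ to the other and check the two ends are compatible. The way I would handle this is inductively, triangle by triangle along $r\setminus(s_{-3}\cup s'_{-3})$: the three triangles adjacent to each successive boundary edge are associated with distinct Pauli matrices (as established after Theorem \ref{T - Equivariant} and reused in Lemma \ref{L - reflection T strip}), and since the strip is \emph{not} a $T$-strip past the trimmed initial segments, the matrix on each new triangle is forced to repeat one of the two preceding ones in the only way permitted by Lemma \ref{L - rings}. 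A bookkeeping check that the reduced words $XYXZYZ$, $YZYXZX$, $ZXZYXY$ and their cyclic permutations never contain a length-$6$ subword using all three letters in the relevant pattern then shows the missing matrix is the same at both ends, so a single Pauli matrix is excluded from the entire central strip, and $r\setminus(s_{-3}\cup s'_{-3})$ is a $1$-strip of the asserted length $n\geq 3$.
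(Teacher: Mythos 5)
There is a genuine gap, and it occurs at both of the load-bearing steps of the lemma. First, your argument for the existence of the second $T$-strip $s'$ --- ``the $T$-strip structure inside $r$ must terminate at both ends, producing $s'$'' --- is not an argument: maximality of $s$ only says the $T$-pattern breaks at the initial triangle of $s$; it says nothing about a $T$-pattern ever \emph{resuming} at the other end of $r$. Equivalently, you must show that the strip $b\subset r$ containing the three initial triangles of $s$ and excluding a Pauli matrix $M$ is \emph{finite}, and this is not a local fact: in the puzzles that are unions of parallel bi-infinite $T$-strips, a $1$-strip transverse to the $T$-strips can exclude one Pauli matrix forever while satisfying every ring condition, so no vertex-by-vertex bookkeeping with Lemma \ref{L - rings} and Lemma \ref{L - Pauli NPC} can rule out an infinite $b$. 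Any correct proof must invoke the standing hypothesis that $p$ contains no bi-infinite $T$-strip, which your proposal never uses for this step. The paper's proof does exactly this: assuming $b$ infinite, it constructs (using Lemma \ref{L - rings} and Lemma \ref{L - reflection T strip}) a sequence $(b_k)$ of consecutively adjacent parallel strips, each excluding some Pauli matrix, whose union is a half-plane in $p$; that half-plane contains a bi-infinite $T$-strip transverse to the $b_k$'s, contradicting the hypothesis.

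Second, your inductive claim that past the trimmed segment ``the matrix on each new triangle is forced to repeat one of the two preceding ones'' is false as a local statement. Nothing in the ring conditions prevents the middle region of $r$ from being an excluding strip, then a \emph{finite} maximal $T$-strip, then a second excluding strip --- in which case $r\setminus(s_{-3}\cup s'_{-3})$ would not exclude a single matrix and the lemma would fail. The paper eliminates this configuration not by analyzing subwords of $XYXZYZ$, $YZYXZX$, $ZXZYXY$, but again globally: by Lemma \ref{L - finite T strip}, a maximal finite $T$-strip sandwiched inside $r$ would extend to a bi-infinite $T$-strip transverse to $r$, contradicting the standing hypothesis once more. (A small additional remark: the bound $n\geq 3$ does not need your ``two distinct transitions'' argument --- it is automatic, since $b$ contains the three initial triangles of $s$ by construction.) In summary, your proposal treats the lemma as a local propagation statement, whereas both of its substantive claims are consequences of transversality arguments that consume the no-bi-infinite-$T$-strip assumption; without that mechanism the proof does not go through.
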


\begin{proof}
Since $s$ is maximal, there exists a Pauli matrix, say $M$, which is excluded from the first three initial triangles. Let $b$ be the largest 1-strip in $r$ which contains these three triangles and excludes $M$. 

We claim that $b$ is finite. 

Suppose not. Let $b_1$ be a 1-strip parellel to $b$ in $p$ with a common initial vertex $v$. For every vertex $v'\neq v$ common to $b$ and $b_1$, it follows from Lemma \ref{L - rings} that the triangles in $b_1$ containing $v'$ exclude a Pauli matrix $N$ (necessarily such that $N\neq M$). By connectedness of $b_1$, the triangles adjacent to every vertex $v'\neq v$ exclude the same matrix $N$. 

Let $r_1$ be the 1-strip in $p$ containing $b_1$, and let $b_1'$ be the maximal 1-strip included in $r_1$ which exclude matrix $N$. Then by Lemma \ref{L - reflection T strip} the 1-strip  $r_1\setminus b_1'$ is a $T$-strip, being adjacent to $s$.

Furthermore, Lemma \ref{L - rings} at the unique vertex in $r\cap b_1'\cap (r_1\setminus b_1')$ shows that the angle between the initial edges of the two strips $b$ and $b_1'$ is $\pi$. Therefore, by induction setting $b_0=b$, we define a sequence $(b_k)_{k\in \ZI}$ of 1-strips, which are consecutively adjacent, which individually exclude a Pauli matrix (which depend on the given strip), and whose union $P:=\bigcup_{k\in \ZI} b_k$ is a half plane contained in $p$. This however implies that $P$ contains a bi-infinite $T$-strip which is transverse to the $b_k$'s, contrary to our assumption that $p$ doesn't. 

This proves that $b$ is finite. 

We now prove that $r\setminus b$ is a union of two $T$-strips. Since $b$ contains at least 3 triangles, and these two strips are maximal $T$-strips, this proves the lemma.

Suppose that $r\setminus (s\cup b)$ fails to be a $T$-strip. Then it contains a 1-strip $b'$ which excludes a Pauli matrix. Since $b$ is maximal, $b$ and $b'$ are disjoint. Suppose that $b'$ is the 1-strip in $r\setminus (s\cup b)$ closest to $b$ satisfying this property. Then the 1-strip between $b$ and $b'$ in $r$ a $T$-strip, which is clearly maximal and finite. By Lemma \ref{L - finite T strip}, $p$ must contain a bi-infinite $T$-strip which is transverse to $r$.  This again contradicts our assumption and concludes the proof.
\end{proof}

\begin{lemma}
In the notation of Lemma \ref{L - excluding a Pauli matrix}, the length of $r\setminus (s_{-3}\cup s'_{-3})$ is odd.
\end{lemma}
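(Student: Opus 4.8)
The plan is to reduce the parity of $n$ to the relative orientation of the two flanking maximal semi-infinite $T$-strips $s$ and $s'$, and then to exclude the ``wrong'' orientation using the standing hypothesis that $p$ contains no bi-infinite $T$-strip.

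First I would record the matrix pattern on $r$. Writing $M$ for the Pauli matrix excluded by $b := r\setminus(s_{-3}\cup s'_{-3})$, condition (1) of the puzzle forces the two remaining matrices to alternate strictly along $b$, and the two triangles of $r$ immediately outside $b$ both carry $M$ (this is exactly what makes $b$ maximal among $M$-excluding strips, as in the proof of Lemma \ref{L - excluding a Pauli matrix}). Hence the parity of $n$ is precisely the question whether the two extreme triangles of $b$ carry the same matrix: $n$ is odd iff they coincide, even iff they differ. Now each deep part $s_{-3}$, $s'_{-3}$ is a $T$-strip, so each determines a gradient $\pm 1$ in $\IZ/3$ for the assignment $\pi$ as one moves along $r$ in a fixed direction. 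Inspecting the maximal-$T$-strip template at each end — by maximality the initial triangle of $s$ repeats its third, giving a word of shape $QRQM$ — I would check that $n$ is odd exactly when $s$ and $s'$ induce opposite gradients on $r$ (they are anti-oriented, i.e. related by the point reflection through the centre of $b$), and even exactly when they induce the same gradient (co-oriented, related by a translation). I would note that the $\IZ/3$-valued invariant $\pi$ alone does not settle the parity — both cases are locally consistent — so the global hypothesis is genuinely needed.

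Next I would assume for contradiction that $n$ is even, so that $s$ and $s'$ are co-oriented. Applying Lemma \ref{L - semi infinite} to each of $s$ and $s'$, each extends to a sector whose second boundary is a semi-infinite $T$-strip meeting $r$ at angle $\pi/3$. I would show that in the co-oriented case these two boundary $T$-strips are parallel and run the same way across the region lying over $b$, so that their union is a single bi-infinite $T$-strip transverse to $r$. Equivalently, stacking the strips parallel to $r$ (using Lemma \ref{L - rings} together with the uniqueness of extensions from Lemma \ref{L - Pauli NPC}) produces again strips each excluding one Pauli matrix, whose exclusion boundaries all translate in the same direction from row to row; their union is then a half-plane, exactly as in the proof of Lemma \ref{L - excluding a Pauli matrix}, and therefore contains a bi-infinite $T$-strip. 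This contradicts the assumption on $p$ and forces $n$ to be odd. In the odd case the two boundaries instead converge and close up into the finite parallelogram of Lemma \ref{L - finite T strip}, consistently with the absence of a bi-infinite $T$-strip.

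The main obstacle is the co-orientation step of the third paragraph: turning ``$s$ and $s'$ have equal $T$-gradient'' into ``the two sector boundaries align into one bi-infinite $T$-strip.'' The difficulty is that this statement is not local — it requires controlling the puzzle simultaneously at both ends of $b$ and along a full transverse line. Concretely I must track how the position of the exclusion boundary of the $k$-th parallel strip shears as a function of the gradient, and verify that co-orientation makes the shears at the two ends \emph{add} (yielding an infinite strip) rather than \emph{cancel} (yielding a finite region). Everything else reduces to finite checks via Lemma \ref{L - rings} and Lemma \ref{L - Pauli NPC}.
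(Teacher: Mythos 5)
Your overall strategy is genuinely different from the paper's, and in substance it is workable, but the step you yourself flag as the main obstacle is exactly where your write-up stops short, and one of your two formulations of the endgame is not literally correct. The paper does not stack rows at all: assuming $n$ even (so $n\geq 4$), it uses Lemma \ref{L - rings} to produce two \emph{parallel adjacent} $T$-strips of length $\geq 4$ transverse to $b$, takes maximal $T$-strips containing them, and invokes the standing no-bi-infinite hypothesis together with Lemma \ref{L - reflection T strip} to conclude these two transverse strips are asymptotic, terminating together (one can translate $s_{-1}$ to $s'_{-1}$ preserving matrices). The contradiction is then purely local at the common terminal vertex $v$: maximality forces the same Pauli matrix to appear three times among the six triangles around $v$, which is impossible since in each ring word of Lemma \ref{L - rings} every matrix occurs exactly twice. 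So the paper localizes the contradiction at the end of two transverse strips, whereas you globalize it into a half-plane; the paper's route entirely avoids the shear bookkeeping you describe.

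On your version: the parity dictionary (even $n$ $\Leftrightarrow$ the two flanking gradients agree, via the alternating two-letter window flanked by two copies of the excluded matrix) is correct and is a clean invariant formulation not made explicit in the paper. The row-propagation mechanism is also sound: using the forced seed at a palindromic vertex inside the window (a bottom triple of the form $QRQ$ determines its ring uniquely) together with Lemma \ref{L - Pauli NPC}, each adjacent row is uniquely determined, carries a window of the same even length with the same co-orientation, translated by one slot in a fixed direction going up and the opposite direction going down; the deep $T$-parts on one side of the resulting fault line then union to a genuine half-plane whose boundary is \emph{parallel to the transverse strip direction}, and that half-plane contains bi-infinite transverse $T$-strips, contradicting the hypothesis. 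But two caveats. First, this shear computation is the entire content of the proof in your approach, and you only announce it (``I would show'', ``I must track and verify''); as written the argument is a plan, not a proof, precisely at its critical point. Second, your first variant is wrong as stated: the two sector boundaries furnished by Lemma \ref{L - semi infinite} for $s$ and $s'$ are parallel but lie on \emph{different} lines (offset on opposite sides of the fault, pointing opposite ways), so their union is not a single bi-infinite $T$-strip; only your second, half-plane formulation survives, and even there one must check that the half-plane's boundary is parallel to the transverse direction --- a half-plane bounded transversally would contain no bi-infinite transverse strip at all.
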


\begin{proof}
Suppose towards a contradiction that $n=|b|$ is even, $b:=r\setminus (s_{-3}\cup s'_{-3})$. Then $n\geq 4$, and by Lemma \ref{L - rings}, there there are two parallel adjacent $T$-strips of length $\geq 4$ which are transverse to $b$. Let $s$ and $s'$ be maximal $T$-strips containing them. Since $s$ and $s'$ are parallel and adjacent, and by assumption they are not bi-infinite, Lemma \ref{L - reflection T strip} shows that they are asymptotic in $p$. 
In fact, one can map $s_{-1}$ to $s'_{-1}$ with a translation of $E$ that preserves the Pauli matrices. Let $v$ be the vertex on the boundary of $s_{-1}\cup s'_{-1}$ (a union of two $T$-strips) which belongs to $s\cap s'$. The three triangles in $s_{-1}\cup s'_{-1}$ adjacent to $v$ contain two Pauli matrices, say $M$ and $N$, with $M$ say, appearing once. However, by maximality of $s$ and $s'$, the matrix $M$ extends $s_{-1}$ and $s_{-1}'$ into $s$ and $s'$, respectively. Thus, it appears three times in the vicinity of $v$, which is a contradiction.  
\end{proof}

We also record the following, whose proof is contained in that of  Lemma \ref{L - excluding a Pauli matrix}.

\begin{lemma}\label{L - b is finite}
There does not exist a semi-infinite 1-strip $b$ which excludes a given Pauli matrix.  
\end{lemma}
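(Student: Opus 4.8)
The plan is to isolate, as a standalone statement, the sub-argument inside the proof of Lemma~\ref{L - excluding a Pauli matrix} which shows that the strip $b$ occurring there is finite; the same reasoning applies verbatim to an arbitrary excluding strip. I argue by contradiction: suppose that, contrary to the claim, $p$ contains a semi-infinite 1-strip $b$ all of whose triangles avoid a fixed Pauli matrix $M$. The standing assumption of this section is that $p$ contains no bi-infinite $T$-strip, and it is from this that I will derive a contradiction.

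First I would produce a parallel companion strip. Let $b_1$ be the 1-strip parallel and adjacent to $b$ sharing an initial vertex $v$ with $b$. At each common vertex $v'\neq v$ of $b$ and $b_1$, the local ring at $v'$ is one of the reduced words of Lemma~\ref{L - rings}; reading off the two triangles of $b_1$ meeting $v'$ shows that they omit a single Pauli matrix $N$, necessarily with $N\neq M$. By connectedness of $b_1$ this excluded matrix is the same at every vertex $v'\neq v$, so $b_1$ excludes $N$ throughout. Passing to the bi-infinite 1-strip $r_1$ containing $b_1$ and to the maximal sub-strip $b_1'\subset r_1$ excluding $N$, Lemma~\ref{L - reflection T strip} shows that the complementary strip $r_1\setminus b_1'$ is a $T$-strip.

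Next I would iterate to fill a half-plane. Applying Lemma~\ref{L - rings} at the unique vertex of $r\cap b_1'\cap(r_1\setminus b_1')$ shows that the initial edges of $b$ and $b_1'$ meet at angle $\pi$, so the excluding strips stack consecutively in one transverse direction. Setting $b_0:=b$ and repeating the above construction, I obtain a sequence $(b_k)_{k\in\ZI}$ of consecutively adjacent semi-infinite 1-strips, each excluding a single Pauli matrix (depending on $k$), whose union $P:=\bigcup_{k\in\ZI}b_k$ is a half-plane contained in $p$.

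Finally I would derive the contradiction: a half-plane $P$ tiled by strips that each exclude a Pauli matrix necessarily contains a bi-infinite 1-strip transverse to all the $b_k$'s in which every three consecutive triangles realize all three Pauli matrices, i.e.\ a bi-infinite $T$-strip. This contradicts the standing assumption that $p$ contains no such strip, and completes the proof. The main obstacle is this last step, namely checking that the direction transverse to the $b_k$'s is forced to be a $T$-strip; but this is precisely the mechanism already employed at the end of the proof of Lemma~\ref{L - excluding a Pauli matrix}, so beyond tracking the excluded matrices $M,N,\dots$ across consecutive strips no new argument is needed.
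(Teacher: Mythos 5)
Your proposal is correct and takes essentially the same route as the paper: the paper offers no independent proof of this lemma, stating only that its proof ``is contained in that of Lemma~\ref{L - excluding a Pauli matrix}'', and the sub-argument you isolate --- the parallel excluding strip $b_1$ via Lemma~\ref{L - rings}, the angle-$\pi$ alignment at the junction vertex, the half-plane $P=\bigcup_{k\in\ZI}b_k$, and the transverse bi-infinite $T$-strip contradicting the standing no-bi-infinite-$T$-strip assumption --- is exactly that argument, reproduced at the same level of detail. The only point worth noting is that your appeal to Lemma~\ref{L - reflection T strip} to see that $r_1\setminus b_1'$ is a $T$-strip implicitly uses an adjacent $T$-strip to reflect (in the paper this is the ambient maximal semi-infinite $T$-strip $s$), so in the fully general setting of an arbitrary excluding strip this step deserves a word; but this caveat applies equally to the paper's own claim that the argument transfers verbatim.
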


Furthermore, the following holds:

\begin{lemma}
Every strip in $p$ is of the form described in Lemma \ref{L - excluding a Pauli matrix}.
\end{lemma}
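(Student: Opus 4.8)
The plan is to show that the dichotomy established in Lemma \ref{L - excluding a Pauli matrix} is in fact exhaustive: every bi-infinite 1-strip $r$ in $p$ decomposes as the union of two maximal semi-infinite $T$-strips $s$, $s'$ (pointing in opposite directions) together with a finite connecting 1-strip $b = r \setminus (s_{-3} \cup s'_{-3})$ of the stated form. Since we are working under the standing assumption that $p$ contains no bi-infinite $T$-strip, the first step is to fix an arbitrary bi-infinite 1-strip $r$ and analyze its $T$-strip structure directly, rather than starting from a prescribed triangle $t_0$ as in Lemma \ref{L - excluding a Pauli matrix}.

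First I would argue that $r$ cannot itself be a $T$-strip (by assumption) and cannot contain a bi-infinite $T$-strip as a sub-1-strip, since a 1-strip is one triangle wide and a bi-infinite $T$-strip inside it would exhaust it. Hence every maximal $T$-substrip of $r$ is either finite or semi-infinite. The key observation is that $r$ must contain at least one semi-infinite maximal $T$-strip: by the argument already recorded just before Lemma \ref{L - excluding a Pauli matrix}, every triangle lies in a $T$-strip $s_0$, and a maximal $T$-strip $s \supseteq s_0$ must be semi-infinite, because Lemma \ref{L - finite T strip} would otherwise force a transverse bi-infinite $T$-strip, contradicting our assumption. So $r$ contains a maximal semi-infinite $T$-strip $s$. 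Now I would invoke Lemma \ref{L - excluding a Pauli matrix} verbatim, applied to any triangle $t_0 \in s$: it produces a second maximal semi-infinite $T$-strip $s' \neq s$ in $r$ with $s \cap s' = \varnothing$, and shows that $r \setminus (s_{-3} \cup s'_{-3})$ is a finite 1-strip of some length $n \geq 3$ excluding one of the Pauli matrices. This is exactly the form described in Lemma \ref{L - excluding a Pauli matrix}.

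The remaining point is uniqueness, namely that $s$, $s'$, and the connecting strip $b$ are the \emph{only} structure $r$ can carry; that is, $r$ contains no further finite maximal $T$-strip hidden inside $b$, and no third semi-infinite $T$-strip. This is precisely the content of the last paragraph of the proof of Lemma \ref{L - excluding a Pauli matrix}: if $r \setminus (s \cup b)$ failed to be a $T$-strip, one would extract an intermediate 1-strip $b'$ excluding a Pauli matrix, the 1-strip between $b$ and $b'$ would be a maximal finite $T$-strip, and Lemma \ref{L - finite T strip} would again manufacture a transverse bi-infinite $T$-strip, contradicting the standing assumption. Thus the two semi-infinite $T$-strips flanking a single finite excluding-strip $b$ account for all of $r$, and $r$ is indeed of the form in Lemma \ref{L - excluding a Pauli matrix}.

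The main obstacle is not any single computation but the bookkeeping needed to verify that the construction of Lemma \ref{L - excluding a Pauli matrix} does not depend on the auxiliary choice of starting triangle $t_0$, so that the conclusion genuinely attaches to $r$ rather than to $(t_0, s)$. Concretely, one must check that the excluded matrix $M$ and the strip $b$ are intrinsic to $r$: this follows because $b$ is defined as the maximal 1-strip in $r$ excluding $M$ containing the three initial triangles of $s$, and maximality of the two $T$-strips $s$, $s'$ pins down $M$ uniquely. Once this independence is made explicit, the statement reduces to reassembling facts already proved, and the argument closes by appeal to Lemmas \ref{L - rings}, \ref{L - finite T strip}, and \ref{L - excluding a Pauli matrix} together with the no-bi-infinite-$T$-strip hypothesis.
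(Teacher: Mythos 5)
Your overall plan coincides with the paper's proof: exhibit a maximal semi-infinite $T$-strip $s$ inside the given bi-infinite strip $r$, then apply Lemma~\ref{L - excluding a Pauli matrix} to the triple $(t_0,s,r)$. But the step where you produce $s$ has a genuine gap. You justify ``$r$ contains a maximal semi-infinite $T$-strip'' by citing the observation, recorded before Lemma~\ref{L - excluding a Pauli matrix}, that every triangle lies in a $T$-strip. That observation yields a $T$-strip through the triangle in \emph{some} of the three strip directions, not necessarily along $r$. It can fail along a fixed direction: in the Pauli $M$-puzzles, the middle triangle of a length-$3$ excluding strip $b\subset r$ lies in no $T$-triple inside $r$ (its triple along $r$ is $b$ itself, which excludes a matrix); the $T$-strips through it are transverse to $r$. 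So the existence of even one $T$-triple inside $r$ is precisely what needs proof, and your argument does not supply it.

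Concretely, the unhandled case is a strip whose matrix sequence is $2$-periodic, $MNMN\cdots$, on an infinite sub-strip of $r$: a priori such an $r$ contains no $T$-triple at all, hence no semi-infinite $T$-strip, and your reduction never gets started. The paper closes exactly this case with Lemma~\ref{L - b is finite}, which you never invoke: take the maximal sub-strip $b\ni t_0$ of $r$ which either is a $T$-strip or excludes a Pauli matrix (every length-$3$ sub-strip is of one of these two kinds, since adjacent tiles carry distinct matrices); in the excluding case $b$ is finite by Lemma~\ref{L - b is finite}, and the first triangle of a component of $r\setminus b$ begins a maximal $T$-strip, which is semi-infinite because a finite one would force a transverse bi-infinite $T$-strip via Lemma~\ref{L - finite T strip} and a bi-infinite one is excluded by the standing assumption. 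With that case supplied, the rest of your write-up is correct but partly redundant: the statement only asserts that the decomposition of Lemma~\ref{L - excluding a Pauli matrix} exists for every strip, so your closing discussion of uniqueness and independence of the auxiliary triangle $t_0$ (which re-derives the last paragraph of the proof of Lemma~\ref{L - excluding a Pauli matrix}) is not needed.
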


\begin{proof}
Let $r$ be a 1-strip in $p$, $t_0$ a triangle of $r$, $b$ be the 1-strip in $r$ which contains $r$, and is maximal with respect to the property of either being $T$-strip, or excluding a Pauli matrix. In the first case, \ref{L - excluding a Pauli matrix} applies; in the second, $|b|$ is finite by Lemma \ref{L - b is finite}; let $t_1$ be the first triangle in a component of $r\setminus b$, and let $s$ be the maximal $T$-strip containing $t_1$. Then $s$ is a semi-infinite strip by lemma \ref{L - finite T strip}, so Lemma \ref{L - excluding a Pauli matrix} applies in this case as well. 
\end{proof}

There always exist 1-strips in $p$ with arbitrary large $b$'s in them, as we shall see below. On the other hand, the minimum length of $b$ is  attained:

\begin{lemma}
There exists a 1-strip in $p$ realizing the minimum $n=3$ in Lemma \ref{L - excluding a Pauli matrix}.
\end{lemma}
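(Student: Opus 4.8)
The plan is to reduce the statement to the production of a single bi-infinite $1$-strip whose exclusion part has length exactly $3$. First I would record that, as $r$ ranges over the bi-infinite $1$-strips of $p$, the set of lengths $n$ furnished by Lemma \ref{L - excluding a Pauli matrix} is a nonempty set of integers bounded below by $3$; hence it attains a minimum $n_0$, which by the preceding parity lemma is odd. Attainment being automatic for a set of integers, the entire content is the assertion $n_0=3$, and since the a priori bound already gives $n_0\geq 3$, what must really be ruled out is $n_0\geq 5$.

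Next I would locate the minimum geometrically at the apex of a sector. By the observation made at the start of this section, $p$ contains a maximal semi-infinite $T$-strip $s$; by Lemma \ref{L - semi infinite} it is a boundary strip of a sector whose opposite boundary is a semi-infinite $T$-strip meeting $s$ at angle $\pi/3$ in an apex vertex $v$. The defect (initial triangle) of $s$ sits at $v$, and by the proof of Lemma \ref{L - excluding a Pauli matrix} the first three triangles of $s$ exclude a Pauli matrix $M$ and constitute the core of the exclusion part $b$ of the bi-infinite strip $r\supseteq s$. I would then examine the hexagon of $p$ centered at $v$: by Lemma \ref{L - rings} its ring is one of the three fundamental words up to rotation and inversion, and it is pinned down completely by the fact that the core reads, say, $YXY$ (two equal matrices flanking a third). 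Reading off the two triangles of $r$ immediately beyond the core on either side and propagating the ring relation through the adjacent hexagons, one checks that $M$ recurs after exactly one further step on both sides, so that $b$ cannot be extended and has length $3$.

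The delicate point, which I expect to be the main obstacle, is exactly this local determination. The transverse $T$-strip structure governed by Lemmas \ref{L - reflection T strip} and \ref{L - finite T strip} leaves open, a priori, a configuration in which $M$ recurs only after two steps, which would give an exclusion part of length $4$; here the parity lemma is decisive, since it forbids even lengths and thereby forces the minimal configuration to be the odd value $3$. Concretely I would phrase the argument as a descent: assuming $n_0\geq 5$, one passes from $r$ to the adjacent parallel strip on the side toward which the sector of $s$ opens, where the defects of the two flanking maximal semi-infinite $T$-strips are brought closer together, decreasing the exclusion length; by parity this decrement is even and by Lemma \ref{L - finite T strip} it cannot be compensated by a finite transverse $T$-strip without creating a bi-infinite $T$-strip, contrary to the standing assumption of this section. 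Iterating, the descent terminates necessarily at $3$, so the minimum $n=3$ is realized. The crux to be nailed down is the assertion that one such parallel step strictly shortens $b$ (equivalently, that the apex hexagon genuinely caps the core at length $3$ rather than $5$), and this is where the full force of Lemma \ref{L - rings} together with the oddness of $n$ must be combined.
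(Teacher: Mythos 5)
Your skeleton---the minimum $m$ exists, is odd by the parity lemma, and should be driven from $\geq 5$ down to $3$ by passing to a parallel $1$-strip on which the excluding part shrinks by $2$---is indeed the paper's strategy, but the two places where you defer the work are exactly where your argument either breaks or remains empty. The local claim in your second paragraph is false: no analysis of the hexagon at the apex of the sector (or at the defect of a maximal semi-infinite $T$-strip) can show that $b$ ``has length $3$,'' because the puzzle contains $1$-strips with arbitrarily large excluding parts $b$ (the paper states this explicitly just before the lemma). The ring relation at the defect determines only the completion of that one ring; it says nothing about how far the exclusion propagates along $r$, so the asserted ``local determination'' does not exist, and the apex of a sector is not where the minimum is located.

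The genuine content of the lemma is precisely the step you flag as ``the crux to be nailed down'' and never carry out: that one parallel step strictly shortens $b$. The paper's proof supplies it, and in a one-step form that needs no iteration. With $m\geq 5$ odd, let $I$ be the \emph{short} side of $b$, of length $(m-1)/2\geq 2$; this bound guarantees $I$ has at least one interior vertex. At each interior vertex of $I$ the three triangles of $b$ form a palindrome in the two non-excluded matrices (e.g.\ $XYX$ when $b$ excludes $Z$), and Lemma \ref{L - rings} forces the opposite half of the ring to be the complementary palindrome (e.g.\ $ZYZ$); overlapping these forced triples along $I$ produces in the parallel strip $r'$ an excluding $1$-strip of length $m-2$, contradicting minimality of $m$ outright. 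Note two defects in your version of this step. First, the side matters: on the \emph{long} side of $b$ the same ring-forcing lengthens the excluding part (this is how arbitrarily large $b$'s arise), so a descent that picks ``the side toward which the sector of $s$ opens'' without identifying it with the short side of $b$ can move in the wrong direction and fail to terminate at $3$. Second, your appeal to Lemma \ref{L - finite T strip} to exclude ``compensation'' plays no role once the forcing along $I$ is done correctly; the only inputs needed are Lemma \ref{L - rings} at the interior vertices of $I$ and the arithmetic $(m-1)/2\geq 2$, which is exactly why the argument stops at $m=3$, where $I$ has length $1$ and no interior vertex is available.
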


\begin{proof}
Let $m$ denote the minimum over the set of 1-strips in $p$, $r$ be a 1-strip in $p$ realizing the minimum,  $b:=r\setminus (s_{-3}\cup s'_{-3})$, with $m=|b|$. By the previous lemmas, we may suppose  that $m$ is an odd number $\geq 5$. Let $I$ be the short side in $b$, of length $(m-1)/2$. The latter integer is $\geq 2$ and Lemma \ref{L - rings} applies to show that the the 1-strip $r'$ parallel to $r$ and containing $I$ containing a 1-strip excluding the Pauli matrix in $r$ belonging to the triangles adjacent to $I$, of length $m-2$, contradicting the minimality of $m$. 
\end{proof}

\begin{lemma}\label{L - 3 strips}
There exists pairwise transverse 1-strips $r,r',r''$ realizing the minimum $n=3$ in Lemma \ref{L - excluding a Pauli matrix} whose convex closure is $p$ and common intersection an equilateral triangle in $p$.
\end{lemma}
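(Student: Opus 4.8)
The plan is to start from a $1$-strip $r$ realizing the minimum $n=3$, whose existence is guaranteed by the preceding lemma, and to manufacture the other two strips out of the two semi-infinite $T$-strip arms of $r$. Write $r=s_{-3}\cup b\cup s'_{-3}$ as in Lemma \ref{L - excluding a Pauli matrix}, where $s,s'$ are the two maximal semi-infinite $T$-strips and $b$ is the length-$3$ middle block excluding a Pauli matrix $M$. Using the cyclic symmetry $X\mapsto Y\mapsto Z$, under which the three relations of Lemma \ref{L - rings} are permuted, I would first normalize to $M=Z$, so that the three triangles of $b$ carry matrices in $\{X,Y\}$ and read $XYX$ or $YXY$; let $t_0$ be the central triangle of $b$.

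The main step is to show that the two $1$-strips $r'$ and $r''$ through $t_0$ transverse to $r$ also realize $n=3$. First I would pin down the local picture: the hexagonal relations of Lemma \ref{L - rings}, applied at the two vertices of $b$ incident to $t_0$, force the matrices on the triangles adjacent to $b$, and a short finite check, controlled by Lemma \ref{L - Pauli NPC} so that no length-$4$ pattern recurs, shows that near $t_0$ each of $r'$ and $r''$ carries a length-$3$ block $b'$, $b''$ excluding the matrices $M'$, $M''$ with $\{M,M',M''\}=\{X,Y,Z\}$. To see that beyond these blocks the strips $r',r''$ are genuine $T$-strips, I would invoke Lemma \ref{L - semi infinite}: each of the two arms of $r$ extends to a sector of $p$ of angle $\pi/3$ whose opposite boundary is a semi-infinite $T$-strip transverse to $r$, and these transverse boundaries are exactly the arms of $r'$ and $r''$ emanating from $t_0$. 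Hence each of $r'$, $r''$ has both arms $T$-strips and a length-$3$ middle block, so $n=3$ for them as well; since $3$ is the global minimum, this value cannot be exceeded.

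It then remains to verify the two geometric assertions. For the convex closure, the three bands $r,r',r''$ lie in the three distinct strip directions, so their six arms point in the six directions spaced by $\pi/3$; the convex hull of six rays spaced by $\pi/3$ is already the whole plane $p$. More structurally, those six semi-infinite $T$-strip arms produce, via Lemma \ref{L - semi infinite}, six sectors of angle $\pi/3$ which tile the six wedges around $t_0$ (since $6\times\pi/3=2\pi$), so $p$ is their union. For the common intersection, each triangle of $p$ lies in a unique $1$-strip of each of the three directions, whence the only triangle belonging simultaneously to $r$, $r'$ and $r''$ is $t_0$; thus $r\cap r'\cap r''=\{t_0\}$, a single equilateral triangle.

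The hard part will be the finite local analysis around $t_0$, together with checking that the six sectors produced by the arms fit consistently, with no overlaps and no gaps, to fill $p$. This is where the minimality of $n=3$ and the non-recurrence of length-$4$ patterns of Lemma \ref{L - Pauli NPC} do the real work, ruling out any longer middle block on the transverse strips and any obstruction to the threefold symmetric central configuration.
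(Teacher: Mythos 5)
There is a genuine gap, and it sits exactly where you deferred the work: the ``short finite check'' around $t_0$ is the entire content of the paper's proof, and the one concrete outcome you state for it is false. Write the block $b$ as $M\,P\,M$ (excluding $N$), let $e$ be the short boundary edge of $b$ (the base of its middle triangle $\overline t$) and let $t$ be the triangle of $p$ across $e$. Condition (1) forces $t$ to carry $M$ or $N$, and the ring relation of Lemma \ref{L - rings}, applied at the two endpoints of $e$, then determines the adjacent row completely --- in two genuinely different ways, which is the case distinction your proposal misses. If $t$ carries $N$, the adjacent row reads $\ldots M\,P\,N\,P\,M\ldots$, the two transverse strips through $t_0=\overline t$ acquire blocks $P\,N\,P$ excluding $M$, and these blocks are centered at $t$, not at $t_0$; together with the block of $r$ the excluded matrices are $N,M,M$. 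If $t$ carries $M$, the adjacent row reads $\ldots P\,N\,M\,N\,P\ldots$ and all three blocks through $t_0$ read $M\,P\,M$, excluding the \emph{same} matrix $N$. In neither case do the three blocks exclude three pairwise distinct matrices, so the configuration $\{M,M',M''\}=\{X,Y,Z\}$ you describe never occurs; your threefold-symmetric central picture with $t_0$ as center is only correct in the second case (in the first, the symmetric center is $t$, which lies outside $r$, and this is why the paper's proof takes the common triangle to be $t$ there). The paper's argument is precisely this dichotomy: it distinguishes whether $t$ carries $N$ or $M$ and applies Lemma \ref{L - rings} at the three vertices of $t$, respectively of $\overline t$, to force the transverse blocks and their maximality. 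As it happens, the strips through $t_0$ do realize $n=3$ in both cases, so your choice of strips could be salvaged, but the proposal contains no valid derivation of this, and its stated local picture is wrong.

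A secondary weakness is the appeal to Lemma \ref{L - semi infinite}. The assertion that the opposite boundaries of the sectors generated by the two arms of $r$ are ``exactly the arms of $r'$ and $r''$ emanating from $t_0$'' is a nontrivial incidence claim (the apex and the side on which each sector opens depend on the same local data you left unchecked), and nothing in the proposal verifies it; the same goes for the claim that the six sectors fit around $t_0$ with no gaps or overlaps. Fortunately this machinery is also unnecessary for the convex-closure assertion: the convex hull of any two transverse bi-infinite $1$-strips is already all of $E$, so once the three strips through the correct central triangle are shown to realize $n=3$ (the case analysis above), the convexity and the intersection statements follow as in your last paragraph, whose uniqueness argument for $r\cap r'\cap r''$ is fine.
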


\begin{proof}
Let $r$ be such a strip. We maintain the notation $b$ introduced above (so $|b|=3$). Let $e$ be the longitudinal boundary of length 1 in $b$ and $t$ be the triangle of $p$ not contained in $b$ and containing $e$. We distinguish two cases. Let $M$ be denotes the Pauli matrix which appears twice in $b$, and let $N$ be the Pauli matrix which is excluded. 

Suppose that $N$ is the Pauli matrix which belongs to $t$. Let $r'$ and $r''$ be the 1-strips containing the edges of $t$ distinct from $e$ and not containing $t$. Let $n'=|b'|$ and $n''=|b''|$ be the corresponding parameters for $r'$ and $r''$.  We claim that $n'=n''=3$. Let $v$ and $v'$ be the extremities of $e$. Since $b$ is maximal, the matrix $N$ belongs to the two triangles of $r$ respectively containing $v$ and $v'$ and not contained in $b$. Applying Lemma \ref{L - rings} to the 3 vertices of $t$, we see that  $b'$ and $b''$ are maximal of length 3 and contain the same matrices as $b$. It follows immediately that the three 1-strips in $p$ containing $t$ also have the same parameter $n=3$, whose corresponding 1-strips of length 3 excludes the matrix $M$, extending into $p$ with two maximal semi-infinite $T$-strips.  This proves the lemma if $N$ belongs to $t$.

Suppose now that $M$ is the Pauli matrix which belongs to $t$.  Let $\overline t$ be the triangle of $r$ containing $e$, and let $r'$, $r''$ be the two 1-strips $\neq r$ containing $\overline t$. Applying Lemma \ref{L - rings} to the 3 vertices of $\overline t$, we see that $r,r',r''$ have parameter $n=3$, and extend into $p$ with two maximal semi-infinite $T$-strips. This conclude the  proof of the lemma.
\end{proof}

This concludes the proof of Theorem \ref{T - Pauli classifiable}:  by lemma \ref{L - Pauli NPC}, the previous result determines, in the case that $p$ does not contain a bi-infinite $T$-strip, a unique Pauli puzzle, with a prescribed Pauli matrix on the intersection $t$ of the three 1-strips constructed in Lemma \ref{L - 3 strips}. If $M$ is the Pauli matrix, we shall call this puzzle the  Pauli $M$-puzzle. 

We have established a more precise result which we formulate now. 

\begin{theorem}\label{T - Pauli classification 2}
A Pauli puzzle is either a Pauli $M$-puzzle, where $M\in \{X,Y,Z\}$ is a Pauli matrix, or it is a union of parallel $T$-strips.
\end{theorem}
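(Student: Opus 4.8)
The plan is to record the dichotomy that was already established in the course of proving Theorem \ref{T - Pauli classifiable}, and to make its two alternatives explicit. The splitting criterion is whether or not $p$ contains a bi-infinite $T$-strip, and the two branches have been prepared by the lemmas of \ts\ref{S - T strips} and the arguments of \ts\ref{S - proof}.

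First I would dispose of the case in which $p$ contains a bi-infinite $T$-strip $s$. The idea is to propagate the $T$-strip structure across the whole plane by reflection. Choosing a segment $I$ of arbitrary length on the boundary of $s$, Lemma \ref{L - reflection T strip} guarantees that the reflected strip, suitably truncated at its ends, is again a $T$-strip; since $|I|$ may be taken arbitrarily large, the reflection is itself bi-infinite. Iterating on both sides and arguing by induction, one obtains a cover of $p$ by bi-infinite $T$-strips, all parallel to $s$. This is precisely the second alternative in the statement.

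Second I would treat the case in which $p$ contains no bi-infinite $T$-strip; this is where the substance lies, and it was carried out in Lemmas \ref{L - excluding a Pauli matrix}--\ref{L - 3 strips}. The upshot of that analysis is Lemma \ref{L - 3 strips}: there exist three pairwise transverse $1$-strips $r$, $r'$, $r''$, each realizing the minimal parameter $n=3$, whose common intersection is a single equilateral triangle $t$ and whose convex closure is all of $p$. Writing $M$ for the Pauli matrix carried by $t$, the final point is that $M$ on $t$ determines $p$ globally: by the nonpositive curvature property of Lemma \ref{L - Pauli NPC}, each local configuration around a vertex (or across an edge) extends in a unique way, so the marking propagates uniquely outward from $t$. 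Hence $p$ is the Pauli $M$-puzzle.

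The only thing genuinely requiring attention is that the two cases be exhaustive and that, in the second, the determination really be global and unique rather than merely local. Exhaustiveness is immediate from the definition of the splitting criterion. Uniqueness follows because Lemma \ref{L - Pauli NPC} forbids any ambiguity in extending a partial marking, so the inductive extension used throughout \ts\ref{S - proof} involves no choices once the matrix on $t$ is fixed. Since all these ingredients were already assembled in the proof of Theorem \ref{T - Pauli classifiable}, the present theorem is essentially a clean restatement, and I expect no real obstacle beyond this bookkeeping.
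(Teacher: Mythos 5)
Your proposal is correct and follows the paper's own argument essentially verbatim: the same dichotomy on the existence of a bi-infinite $T$-strip, the same use of Lemma \ref{L - reflection T strip} with segments of arbitrary length to cover $p$ by parallel $T$-strips in the first case, and in the second case the same appeal to the chain of lemmas culminating in Lemma \ref{L - 3 strips}, with Lemma \ref{L - Pauli NPC} providing the unique propagation of the marking from the central triangle $t$, yielding the Pauli $M$-puzzle. No gaps; this is exactly how the paper assembles the proof of Theorem \ref{T - Pauli classifiable} into Theorem \ref{T - Pauli classification 2}.
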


Finally, one can describe the transverse structure, in the case of a union of parallel $T$-strips, by a sequence, either finite, semi-infinite, or bi-infinite, of integers (possibly infinite) parametrizing the structure of $p$ transverse to the $T$-strips; this sequence is then a complete isomorphism invariant of ring puzzles in this case. We shall leave this as an exercise.

Let us conclude this section by noting that the Pauli  puzzle construction can be generalized in quite a few ways. If $S$ is a set of involutions in a group $G$, one can define a ring puzzle set with $S$-tiles having as rings the set of reduced words of length 6 in the free product of $\ZI/2\ZI$ over $S$ whose product equals the identity in $G$. The  conditions for a tiling to be a ring puzzle in this sense can be thought of as ``homological'' (although not in a usual sense) in nature. If $(I,J)$ denotes the set of maps from $I$ to $J$, then we have a ``boundary operator'' $d$ defined by
\[
d\colon (E^2,S)\to (E^0,T)
\]
where $E^r$ denotes the $r$-skeleton of $E$ and $T$ the set of conjugacy classes of $G$, taking $p$ to  the cyclic product
\[
dp(v) := {\prod_{t\ni v}}^{\circlearrowleft} p(t)
\]
and $p$ satisfies (2) if and only if $dp=1$, where the cyclic product is independent of the chosen initial letter and the orientation of $E$. This defines a set of ``cycles'', let us say  $Z_{2\to 0}^{\circlearrowleft}(E,S)$. Condition (1) removes a certain set of ``subcycles'' from   $Z_{2\to 0}^{\circlearrowleft}(E,S)$. These are elements $p$ of $Z_{2\to 1}^{\circlearrowleft}(E,S)$  for which there exists $v\in E^0$ for which a strict subproduct of ${\prod_{t\ni v}}^{\circlearrowleft} p(t)$ is trivial. With these definitions, the corresponding set  of puzzles is 
\[
H_{2}^\circlearrowleft(E,S):=Z_{2\to 0}^{\circlearrowleft}(E,S)\setminus Z_{2\to 1}^{\circlearrowleft}(E,S).
\] 
For example, if $S=\{(1,2),(2,3)\}$, then $H_2^ \circlearrowleft(E,S)$ is the checker tiling of $E$; Theorem \ref{T - Pauli classifiable} computes $H_2^\circlearrowleft(E,S)$ when $S$ is the set of Pauli matrices. It seems also interesting to study the case where $E$ is a regular tiling of the sphere or hyperbolic space.

\section{Pauli puzzles as ring puzzles}

We conclude with a  presentation of the Pauli $X$-puzzles as ring puzzle in the sense of \cite{autf2puzzles}. 

We use the following encoding of the Pauli matrices:

\begin{enumerate}
\item
[]
\begin{tikzpicture}
\node at (0,0) {$X=\left(\begin{smallmatrix}0&1\\1&0\end{smallmatrix}\right)=$};
\shade[ball color=red!80!yellow] (1.1,0) circle (0.5ex);
\end{tikzpicture}
\item
[]
\begin{tikzpicture}
\node at (0,0) {$Y=\left(\begin{smallmatrix}0&-i\\i&0\end{smallmatrix}\right)=$};
\shade[ball color=blue!60!green] (1.1,0) circle (0.5ex);
\end{tikzpicture}
\item
[]
\begin{tikzpicture}
\node at (0,0) {$Z=\left(\begin{smallmatrix}1&0\\0&-1\end{smallmatrix}\right)=$};
\shade[ball color=yellow!70!orange] (1.1,0) circle (0.5ex);
\end{tikzpicture}

\end{enumerate}

To define a ring puzzle one needs two sets, a set of shapes and a set of rings. These sets are:

\begin{enumerate}
\item
[]  shape set:
\bigskip
\[
\includegraphics[width=5cm,trim = 0cm .3cm 0cm 0cm]{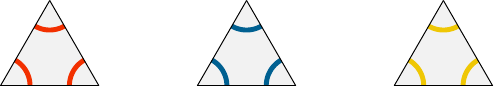}
\]
\vskip1cm  
\item
[] ring set:
\bigskip
\[
\includegraphics[width=1cm,trim = 0cm .9cm 0cm 0cm]{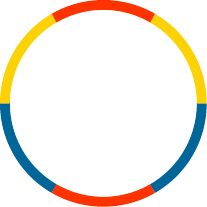}\ \ \ \ \ \ \ \ \includegraphics[width=1cm,trim = 0cm .9cm 0cm 0cm]{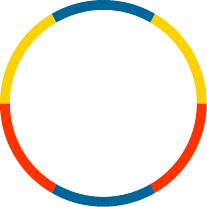}\ \ \ \ \ \ \ \ \includegraphics[width=1cm,trim = 0cm .9cm 0cm 0cm]{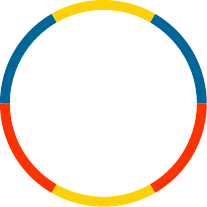}
\]
\vskip1cm  

\end{enumerate}

It follows from the results in the preceding sections that there exist precisely two Pauli $X$-puzzles. 

\newpage

\begin{figure}[H]
\includegraphics[width=13cm]{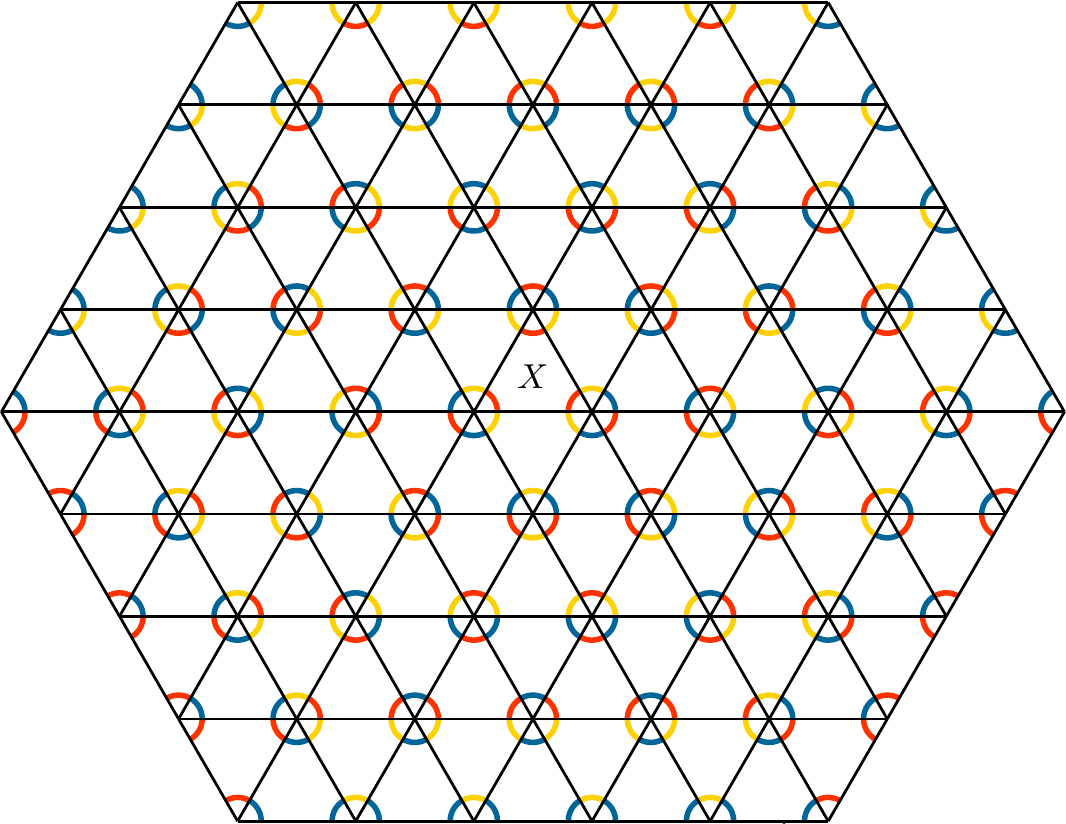}
\end{figure}

\begin{figure}[H]
\includegraphics[width=13cm]{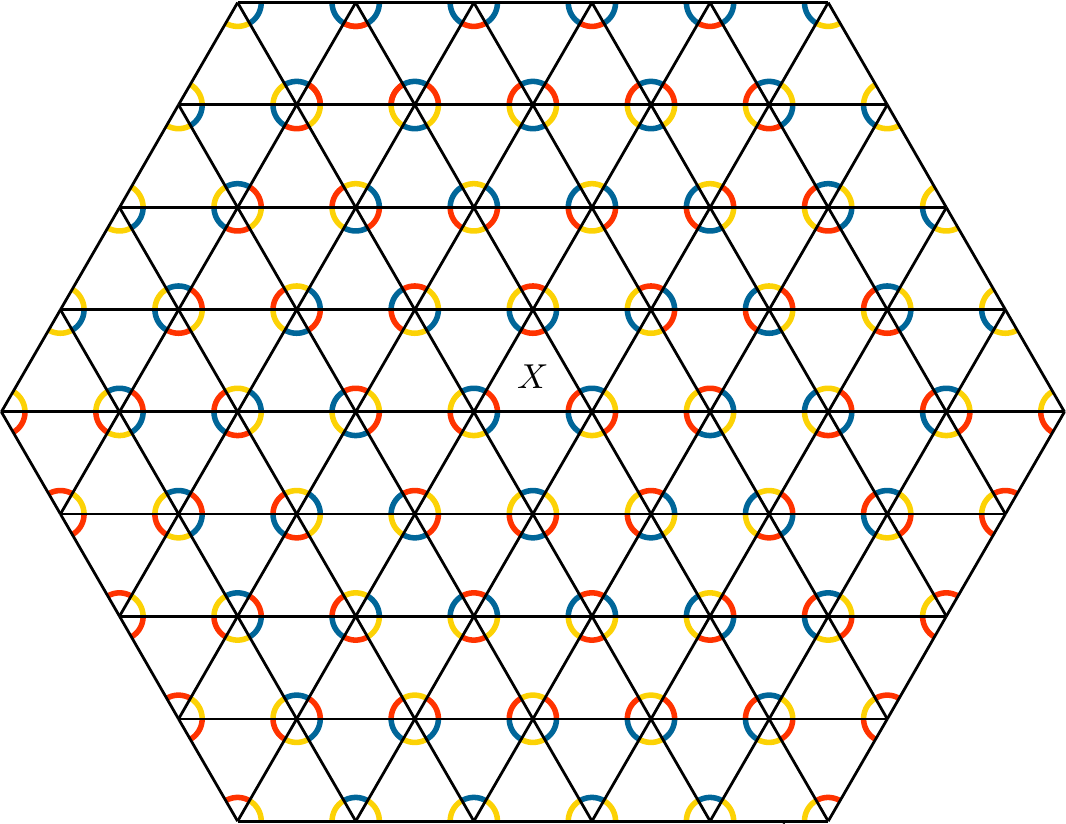}
\end{figure}

\end{document}